\documentclass[12pt]{amsart}
\usepackage[dvips]{graphicx}
\usepackage{amssymb,amsmath,amsthm}
\textwidth=15cm
\textheight=22cm
\topmargin=0.5cm
\oddsidemargin=0.5cm
\evensidemargin=0.5cm
\pagestyle{plain}

\def\squarefree{C_A^{\ {\rm sf}}}
\def\squarefreesquarefree{C_A^{\ {\rm sfsf}}}
\def\squarefreeG{C_{A_G}^{\ {\rm sf}}}
\def\squarefreesquarefreeG{C_{A_G}^{\ {\rm sfsf}}}

\def\bold{\bf}

\def\0b{{\bold 0}}
\usepackage{bm}
\bmdefine{\Bzero}{0}
\bmdefine{\Bone}{1}
\def\Bone{{\bf 1}}
\def\RR{{\mathbb R}}
\def\ZZ{{\mathbb Z}}

\newtheorem{Theorem}{Theorem}[section]
\newtheorem{Lemma}[Theorem]{Lemma}
\newtheorem{Corollary}[Theorem]{Corollary}
\newtheorem{Proposition}[Theorem]{Proposition}

\newtheorem{Example}[Theorem]{Example}

\begin{document}
\title{Toric ideals and their circuits}
\author{Hidefumi Ohsugi and Takayuki Hibi}
\thanks{This research is supported by JST CREST}
\address{Hidefumi Ohsugi,
Department of Mathematics,
College of Science,
Rikkyo University,
Toshima-ku, Tokyo 171-8501, Japan} 
\email{ohsugi@rikkyo.ac.jp}
\address{Takayuki Hibi,
Department of Pure and Applied Mathematics,
Graduate School of Information Science and Technology,
Osaka University,
Toyonaka, Osaka 560-0043, Japan}
\email{hibi@math.sci.osaka-u.ac.jp}
\date{}

\begin{abstract}
In this paper, we study toric ideals
generated by circuits.
For toric ideals which have squarefree quadratic initial ideals,
a sufficient condition to be generated by circuits is given.
In particular, squarefree Veronese subrings,
the second Veronese subrings and configurations
arising from root systems satisfy the condition.
In addition, we study toric ideals of finite graphs
and characterize the graphs 
whose toric ideals are generated by circuits
$u -v$ such that 
either $u$ or $v$ is squarefree.
There exists several classes of graphs whose toric ideals
satisfy this condition and whose toric rings are nonnormal.
\end{abstract}

\maketitle


\section*{Introduction}

Let $\ZZ^{d \times n}$ be the set of all $d \times n$ integer matrices.
A {\em configuration} of $\RR^d$ is a matrix $A \in \ZZ^{d \times n}$, 
for which
there exists a hyperplane ${\mathcal H} \subset \RR^d$ 
not passing the origin of $\RR^d$ such that each column vector 
of $A$ lies on ${\mathcal H}$.
Throughout this paper, we assume that
the columns of $A$ are pairwise distinct.
Let $K$ be a field and 
$K[T, T^{-1}] 
= K[t_1, t_1^{-1}, \ldots, t_d, t_d^{-1}]$
the Laurent polynomial ring in $d$ variables over $K$.
Each column vector 
${\bf a} = (a_1, \ldots, a_d)^\top \in \ZZ^d$ 
$( = \ZZ^{d \times 1})$, where
$(a_1, \ldots, a_d)^\top$ is the transpose of
$(a_1, \ldots, a_d)$,
yields the Laurent monomial $T^{\bf a} =
t_1^{a_1} \cdots t_d^{a_d}$.
Let $A \in \ZZ^{d \times n}$ be a configuration of $\RR^d$
with ${\bf a}_1, \ldots, {\bf a}_n$ its column vectors.   
The {\em toric ring} of $A$ 
is the subalgebra $K[A]$ of $K[T, T^{-1}]$ 
which is generated by the Laurent monomials
$T^{{\bf a}_1}, \ldots, T^{{\bf a}_n}$ over $K$.
Let $K[X] = K[x_1, \ldots, x_n]$ be the polynomial
ring in $n$ variables over $K$ and define the surjective
ring homomorphism $\pi : K[X] \rightarrow K[A]$
by setting $\pi(x_i) = T^{{\bf a}_i}$ 
for $i = 1, \ldots, n$. 
We say that the kernel $I_A \subset K[X]$ of $\pi$ is 
the {\em toric ideal} of $A$.
It is known that, if $I_A \neq \{0\}$, then
 $I_A$ is generated by homogeneous binomials of degree $\geq 2$.
More precisely,
$$
I_A
=
\left<
\left.
X^{{\bf u}^+} - \ X^{{\bf u}^-}  \in K[X] \ \right| \  
{\bf u} \in {\rm Ker}_\ZZ (A) 
\right>
$$
where
$
{\rm Ker}_\ZZ (A) = 
\{
{\bf u} \in \ZZ^n \ | \  A {\bf u} = {\bf 0}
\}
$.
Here ${\bf u}^+ \in \ZZ_{\geq 0}^n$ (resp.~${\bf u}^- \in \ZZ_{\geq 0}^n$) is the positive part (resp.~negative part) of ${\bf u} \in \ZZ^n$.
In particular, we have ${\bf u} = {\bf u}^+ - {\bf u}^-$.
See \cite{Stu} for details.

The support of a monomial $u$ of $K[X]$ is
${\rm supp} (u) = \{x_i \ | \ x_i \mbox{ divides } u \}$
and the support of a binomial $f=u-v$ is ${\rm supp}(f) = {\rm supp}(u) \cup {\rm supp}(v)$.
We say that an irreducible binomial $f \in I_A$ is a {\em circuit}
of $I_A$ if there is no binomial $g \in I_A$ such that
${\rm supp}(g) \subset {\rm supp}(f )$ and ${\rm supp}(g) \neq {\rm supp}(f )$.
Note that a binomial $f \in I_A$ is a circuit of $I_A$
if and only if $I_A \cap K[\{x_i \ | \ x_i \in {\rm supp}(f )\}]$ is
generated by $f$.
Let $C_A$ be the set of circuits of $I_A$
and define its subsets 
$\squarefree$ and $\squarefreesquarefree$
by
\begin{eqnarray*}
\squarefree
&=&
\{
X^{\bf u} - X^{\bf v} \in C_A \ | \ 
\mbox{ either } X^{\bf u} \mbox{ or } X^{\bf v}
\mbox{ is squarefree }
\},\\
\squarefreesquarefree
&=&
\{
X^{\bf u} - X^{\bf v} \in C_A \ | \ 
\mbox{ both } X^{\bf u} \mbox{ and } X^{\bf v}
\mbox{ are squarefree }
\}.
\end{eqnarray*}
It is known \cite[Proposition 4.11]{Stu} that $C_A \subset {\mathcal U}_A$ where
${\mathcal U}_A$ is the union of all reduced Gr\"obner bases of $I_A$.
Since any Gr\"obner basis is a set of generators,
we have $I_A = \langle {\mathcal U}_A \rangle$.
Bogart--Jensen--Thomas \cite{Rekha} characterized
the configuration $A$ such that $I_A = \langle C_A \rangle$
in terms of polytopes.
On the other hand,
Martinez-Bernal--Villarreal \cite{V}
introduced the notion of ``unbalanced circuits" and
characterized
the configuration $A$ such that $I_A = \langle C_A \rangle$
in terms of unbalanced circuits when $K[A]$ is normal.
Note that, if $K[A]$ is normal, then
any binomial belonging to
a minimal set of binomial generators of $I_A$ 
has a squarefree monomial.
(This fact appeared in many papers.
See, e.g., \cite[Lemma 6.1]{OHtable}.)

One of the most important classes of toric ideals
whose circuits are well-studied is toric ideals arising from
finite graphs.
Let $G$ be a finite connected graph on the vertex set $[d] = \{1,2,\ldots,d\}$
with the edge set $E(G) = \{e_1,\ldots, e_n\}$.
Let ${\bf e}_1, \ldots, {\bf e}_d$
stand for the canonical unit coordinate vector of $\RR^d$.
If $e = \{ i, j \}$ is an edge of $G$, then
the column vector $\rho(e) \in \RR^d$
is defined by $\rho(e) = {\bf e}_i + {\bf e}_j$.
Let $A_G \in \ZZ^{d \times n}$ denote the matrix 
with column vectors $\rho(e_1), \ldots, \rho(e_n)$.
Then $A_G$ is a configuration of $\RR^d$
which is the vertex-edge incidence matrix of $G$. 
Circuits of $I_{A_G}$ are completely characterized in terms of graphs
(Proposition \ref{circuitwalk}).
It is known that $K[A_G]$ is normal if and only if $G$ satisfies 
``the odd cycle condition" (Proposition \ref{oddcyclecondition}).
In \cite[Section 3]{indispensable}, 
generators of $I_{A_G}$ are studied when $K[A_G]$ is normal.
It is essentially shown in \cite[Proof of Lemma 3.2]{indispensable}
that, if $K[A_G]$ is normal, then 
we have $I_{A_G}= \langle \squarefreeG  \rangle$.
Martinez-Bernal--Villarreal \cite[Theorem 3.2]{V}
also proved this fact and claimed that 
the converse is true.
However, as they stated in \cite[Note added in proof]{V},
the converse is false in general.
Several classes of counterexamples are given in Section \ref{graphsection}.

The content of this paper is as follows.
In Section 1,
we study toric ideals having squarefree quadratic initial ideals.
For such configurations,
a sufficient condition to be generated by circuits is given.
In particular, squarefree Veronese subrings,
the second Veronese subrings and configurations
arising from root systems satisfy the condition.
In Section 2, we study toric ideals of finite graphs.
We characterize the graphs $G$ whose toric ideals
are generated by $\squarefreeG$.
A similar result is given for $\squarefreesquarefreeG$.
By this characterization,
we construct classes of graphs $G$
such that
$K[A_G]$ is nonnormal and that
$I_{A_G} = \langle \squarefreeG \rangle
= \langle \squarefreesquarefreeG \rangle$.

\section{Configurations with squarefree quadratic initial ideals}

In this section, we study several classes of
toric ideals with squarefree quadratic initial ideals.
It is known 
\cite[Proposition 13.15]{Stu}
that, if a toric ideal $I_A$ has a squarefree initial ideal,
then $K[A]$ is normal.
First, we show a fundamental fact on quadratic binomials
in toric ideals.
(Since we assume
the columns of $A$ are pairwise distinct,
$I_A$ has no binomials of degree 1.)

\begin{Proposition}
\label{zeroone}
Let $A = (a_{ij})\in \ZZ^{d \times n}$ be a configuration.
Suppose that, for each $1 \leq i \leq d$, there exists $z_i \in \ZZ$ such that
$z_i -1 \leq a_{ij} \leq z_i+1$ for all $1 \leq j \leq n$.
Then, any quadratic binomial in $I_A$
belongs to $\squarefree$.
Moreover, if, for each $1 \leq i \leq d$, there exists $z_i \in \ZZ$ such that
$z_i \leq a_{ij} \leq z_i+1$ for all $1 \leq j \leq n$,
then
any quadratic binomial in $I_A$
belongs to $\squarefreesquarefree$.
\end{Proposition}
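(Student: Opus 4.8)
The plan is to first sort the quadratic binomials of $I_A$ by shape, then settle the circuit condition, and only afterwards read off squarefreeness. A nonzero quadratic binomial $f \in I_A$ must have coprime terms, since cancelling a common variable would leave a linear binomial in $I_A$, which is impossible because the columns of $A$ are distinct. Hence $f$ has one of the shapes $x_p x_q - x_r x_s$ with $p,q,r,s$ pairwise distinct (\emph{type A}) or $x_p^2 - x_r x_s$ with $p,r,s$ pairwise distinct (\emph{type B}) --- both shapes being irreducible binomials; the shape $x_p^2 - x_q^2$ cannot occur, as it would give $2{\bf a}_p = 2{\bf a}_q$ and hence ${\bf a}_p = {\bf a}_q$. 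In particular every quadratic binomial of $I_A$ has a squarefree term, so for the first assertion it remains to show $f \in C_A$. I will use the elementary fact that $I_A$ contains no nonzero polynomial supported on at most two variables: any two columns of a configuration are linearly independent, because ${\bf a}_i = \lambda {\bf a}_j$ together with the linear form cutting out the hyperplane through the columns forces $\lambda = 1$, i.e.\ ${\bf a}_i = {\bf a}_j$. Consequently a type B binomial, whose support has size $3$, is automatically a circuit.

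Next I treat type A. Suppose $f = x_p x_q - x_r x_s$ with $p,q,r,s$ distinct is not a circuit. Its support has size $4$ and $I_A$ has no binomial on $\le 2$ variables, so there is a binomial $g \in I_A$ with $\supp(g) \subset \{x_p,x_q,x_r,x_s\}$ and $|\supp(g)| = 3$; equivalently, three of the columns ${\bf a}_p, {\bf a}_q, {\bf a}_r, {\bf a}_s$ are linearly dependent. In a configuration, a relation $c_p{\bf a}_p + c_q{\bf a}_q + c_r{\bf a}_r = {\bf 0}$ has $c_p + c_q + c_r = 0$ (apply the defining linear form) and all $c_i \ne 0$ (no two columns are dependent); thus ${\bf a}_r$ is an affine combination of ${\bf a}_p, {\bf a}_q$, so ${\bf a}_p, {\bf a}_q, {\bf a}_r$ are collinear, and then ${\bf a}_s = {\bf a}_p + {\bf a}_q - {\bf a}_r$ lies on the same line. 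Put $B = (b_{ij})$ with $b_{ij} = a_{ij} - z_i$ and let ${\bf b}_j$ be its columns. By hypothesis each ${\bf b}_j \in \{-1,0,1\}^d$; the columns ${\bf b}_p, {\bf b}_q, {\bf b}_r, {\bf b}_s$ remain pairwise distinct and collinear, and ${\bf b}_p + {\bf b}_q = {\bf b}_r + {\bf b}_s$ (this follows from ${\bf a}_p + {\bf a}_q = {\bf a}_r + {\bf a}_s$ by subtracting $2(z_1,\dots,z_d)^\top$).

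The contradiction comes from a betweenness argument on that line. Each of the pairs $\{{\bf b}_p, {\bf b}_q\}$ and $\{{\bf b}_r, {\bf b}_s\}$ is symmetric about the common midpoint $\frac{1}{2}({\bf b}_p + {\bf b}_q) = \frac{1}{2}({\bf b}_r + {\bf b}_s)$, with different radii (otherwise the two pairs would coincide), so one of them --- say, after interchanging the two pairs if necessary, $\{{\bf b}_p,{\bf b}_q\}$ --- lies strictly inside the segment spanned by the other. Thus ${\bf b}_p$ and ${\bf b}_q$ both lie strictly between ${\bf b}_r$ and ${\bf b}_s$, and I compare $i$-th coordinates: if $b_{ir} = b_{is}$ then $b_{ip}$ and $b_{iq}$ both equal this common value; if $b_{ir} \ne b_{is}$ then $b_{ip}$ and $b_{iq}$ are integers strictly between $b_{ir}$ and $b_{is}$, two elements of $\{-1,0,1\}$, which forces $\{b_{ir}, b_{is}\} = \{-1,1\}$ and $b_{ip} = b_{iq} = 0$. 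Either way $b_{ip} = b_{iq}$ for every $i$, so ${\bf b}_p = {\bf b}_q$, contradicting the distinctness of the columns of $A$. Hence every type A binomial is a circuit, and the first assertion follows.

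Finally, the second assertion. The stronger hypothesis implies the first one (keep the same $z_i$), so by the above every quadratic binomial of $I_A$ is a circuit with a squarefree term, and it only remains to exclude type B. If $x_p^2 - x_r x_s \in I_A$ with $r \ne s$, then $2{\bf a}_p = {\bf a}_r + {\bf a}_s$, hence $2{\bf b}_p = {\bf b}_r + {\bf b}_s$ with the columns now in $\{0,1\}^d$; coordinatewise $b_{ir} + b_{is} = 2b_{ip} \in \{0,2\}$ forces $b_{ir} = b_{is}$ for all $i$, i.e.\ ${\bf a}_r = {\bf a}_s$, which is impossible. Thus every quadratic binomial of $I_A$ is of type A, both of its terms are squarefree, and it belongs to $\squarefreesquarefree$. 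The only genuinely delicate point is, in the type A case, the passage from ``not a circuit'' to a collinear quadruple of columns, followed by the $\{-1,0,1\}$ squeezing that yields ${\bf b}_p = {\bf b}_q$; the rest is bookkeeping.
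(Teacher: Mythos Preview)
Your proof is correct and follows a genuinely different route from the paper's.

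The paper, after reducing to a $(0,\pm 1)$-matrix, handles the type~A case by invoking \cite[Lemma~4.10]{Stu}: it extracts a circuit $g$ whose positive support sits inside $\{x_1,x_2\}$ and whose negative support sits inside $\{x_3,x_4\}$, so that (up to symmetry) $g = x_1^a x_2^b - x_3^c$. A direct coordinate-by-coordinate analysis of $a\,{\bf a}_1 + b\,{\bf a}_2 = c\,{\bf a}_3$ with $(0,\pm 1)$-entries then forces $a=b=1$, $c=2$, hence $x_3^2 - x_3 x_4 \in I_A$ and ${\bf a}_3 = {\bf a}_4$. Your argument instead avoids Sturmfels' lemma entirely: from any size-$3$ binomial in the support you deduce that three columns are affinely dependent (using that the coefficients must sum to zero in a configuration), hence all four columns are collinear via the relation ${\bf a}_p + {\bf a}_q = {\bf a}_r + {\bf a}_s$; the midpoint/betweenness squeeze in $\{-1,0,1\}^d$ then finishes. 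This is more geometric and fully self-contained, at the price of the extra step establishing collinearity; the paper's route is shorter once one is willing to quote \cite[Lemma~4.10]{Stu}.

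One small notational wobble: when you write ``a relation $c_p{\bf a}_p + c_q{\bf a}_q + c_r{\bf a}_r = {\bf 0}$'' you are silently reusing the labels $p,q,r$ for whichever three columns are dependent, and then in ``${\bf a}_s = {\bf a}_p + {\bf a}_q - {\bf a}_r$'' you switch back to the labels coming from $f$. The mathematics is unaffected---whichever triple is collinear, the fourth column is an affine combination of the other three and lies on the same line---but it would read more cleanly to say this explicitly.
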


\begin{proof}
Suppose that, for each $1 \leq i \leq d$, there exists $z_i \in \ZZ$ such that
$z_i -1 \leq a_{ij} \leq z_i+1$ for all $1 \leq j \leq n$.
It is known \cite[Lemma 4.14]{Stu} that 
there exists a vector ${\bf w} \in \RR^d$ such that
${\bf w} \cdot A = (1,1,\ldots,1)$.
Hence, by elementary row operations,
we may assume that $A$ is a $(0,\pm 1)$-configuration.
Let $A =({\bf a}_1,\ldots, {\bf a}_n) \in \ZZ^{d \times n}$
and
let $f \in I_A$ be a quadratic binomial.
Since the columns of $A$ are pairwise distinct,
$f$ is of the form
either $x_1 x_2 - x_3 x_4$ or $x_1 x_2 -x_3^2$.
Note that $\sharp|{\rm supp} (h)| \geq 3$ for any binomial $h \in I_A$.
Hence $f$ is a circuit if $f = x_1 x_2 -x_3^2$.

Let $f = x_1 x_2 - x_3 x_4$ and suppose that $f \notin C_A$.
By \cite[Lemma 4.10]{Stu},
there exists a circuit $g = X^{\bf u} - X^{\bf v} \in C_A$ such that
${\rm supp}(X^{\bf u} ) \subset\{x_1,x_2\}$
and
${\rm supp}(X^{\bf v} ) \subset\{x_3,x_4\}$.
Since $f$ is not a circuit,
$\sharp|{\rm supp} (g)| < 4$.
Hence we have $\sharp|{\rm supp}(g)|=3$.
Thus we may assume that
$g = x_1^a x_2^b - x_3^c$
where $1 \leq a,b,c \in \ZZ$.
Then, $a \cdot {\bf a}_1 +b \cdot {\bf a}_2 = c \cdot {\bf a}_3$
and $a + b =c$.
Let ${\bf a}_k = (a_1^{(k)}, a_2^{(k)},\ldots,a_d^{(k)})^\top $ for $k =1,2,3$.
Since ${\bf a}_1,{\bf a}_2, {\bf a}_3$ are $(0,\pm 1)$-vectors,
we have the following for each $1 \leq j \leq d$:
\begin{itemize}
\item
If $a_j^{(3)} = 1$, then $a_j^{(1)} = a_j^{(2)} = 1$.
\item
If $a_j^{(3)} = -1$, then $a_j^{(1)} = a_j^{(2)} = -1$.
\end{itemize}
Since ${\bf a}_1$ and ${\bf a}_3$ are distinct,
there exists $1 \leq k \leq d$ such that
$a_k^{(3)} = 0$ and $a_k^{(1)}  \neq 0$.
Then $a \cdot a_k^{(1)}  + b \cdot a_k^{(2)} = 0$ and
hence $a = b $ and $a_k^{(2)}  = - a_k^{(1)} $.
Note that $g = x_1^a x_2^a - x_3^{2a}$ should be irreducible.
It then follows that $a =1$ and $g =  x_1 x_2 - x_3^2$.
Thus $f - g = x_3^2 - x_3 x_4$ belongs to $I_A$ and hence
${\bf a}_3 = {\bf a}_4$, a contradiction.
Therefore $f \in C_A$.

Suppose that, for each $1 \leq i \leq d$, there exists $z_i \in \ZZ$ such that
$z_i \leq a_{ij} \leq z_i+1$ for all $1 \leq j \leq n$.
By elementary row operations,
we may assume that $A$ is a $(0,1)$-configuration.
Let $f = x_1 x_2 -x_3^2 \in I_A$.
Then, ${\bf a}_1 + {\bf a}_2 = 2 \cdot {\bf a}_3$.
Since ${\bf a}_1,{\bf a}_2,{\bf a}_3$ are (0,1)-vectors,
it follows that ${\bf a}_1 = {\bf a}_2= {\bf a}_3$, a contradiction.
\end{proof}

By Proposition \ref{zeroone},
we can prove that several classes
of toric ideals are generated by circuits.

\subsection{Veronese and squarefree Veronese configurations}

Let $2 \leq d, r \in \ZZ$ and $V_d^{(r)} = ( {\bf a}_1,\ldots, {\bf a}_n ) \in \ZZ^{d \times n}$
be the matrix where
$$\{{\bf a}_1,\ldots, {\bf a}_n\}
=
\left\{
(\alpha_1,\ldots, \alpha_d)^\top \in \ZZ^d
\ \left| \ 
\alpha_i \geq 0 , \ \sum_{i=1}^d \alpha_i = r
\right.
\right\}.$$
Then, $K[V_d^{(r)}]$ is called the {\em $r$-th Veronese subring} of $K[t_1,\ldots,t_d]$.
On the other hand, let $SV_d^{(r)} = ( {\bf a}_1,\ldots, {\bf a}_n ) \in \ZZ^{d \times n}$
be the matrix where
$$\{{\bf a}_1,\ldots, {\bf a}_n\}
=
\left\{
(\alpha_1,\ldots, \alpha_d)^\top \in \{0,1\}^d
\ \left| \ 
\sum_{i=1}^d \alpha_i = r
\right.
\right\}.$$
Then, $K[SV_d^{(r)}]$ is called the {\em $r$-th squarefree Veronese subring} of $K[t_1,\ldots,t_d]$.
It is known (e.g., \cite[Chapter 14]{Stu}) that

\begin{Proposition}
\label{Veronese}
Toric ideals $I_{ V_d^{(r)} }$ and $I_{ SV_d^{(r)} }$ have squarefree quadratic initial
ideals and hence $K[ V_d^{(r)} ]$ and $K[ SV_d^{(r)} ]$ are normal.
\end{Proposition}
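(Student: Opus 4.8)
The plan is to exhibit, for each of the two configurations, a Gr\"obner basis of its toric ideal whose initial monomials are all squarefree of degree two; the normality of $K[V_d^{(r)}]$ and $K[SV_d^{(r)}]$ then follows at once from \cite[Proposition 13.15]{Stu}, quoted at the start of this section. In both cases the Gr\"obner basis is the one produced by Sturmfels' \emph{sorting} construction (\cite[Chapter 14]{Stu}), so the work is mainly a matter of arranging the combinatorial bookkeeping and invoking the confluence check.

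Consider first $V_d^{(r)}$. Index the variables $x_i$ by the weakly increasing sequences $S = (s_1 \le \cdots \le s_r)$ in $[d]$, writing $x_S$ for the variable and $T^S = t_{s_1}\cdots t_{s_r}$ for the corresponding Laurent monomial. Given two such sequences $S$ and $T$, sort the $2r$ entries of the concatenation into $u_1 \le u_2 \le \cdots \le u_{2r}$ and set $S' = (u_1,u_3,\ldots,u_{2r-1})$ and $T' = (u_2,u_4,\ldots,u_{2r})$. Then $T^{S'}T^{T'} = T^{S}T^{T}$, so $x_Sx_T - x_{S'}x_{T'} \in I_{V_d^{(r)}}$; call $(S,T)$ \emph{sorted} when $(S',T') = (S,T)$, that is, when $s_1 \le t_1 \le s_2 \le t_2 \le \cdots$. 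One fixes a term order (a sorting order in the sense of \cite[Chapter 14]{Stu}) for which $x_Sx_T$ is the initial monomial of each of these binomials with $(S,T)$ unsorted. Each such initial monomial is quadratic, and it is squarefree: the pair $(S,S)$ is always sorted, so $(S,T)$ unsorted forces $S \ne T$.

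The substantive point is that $\{\,x_Sx_T - x_{S'}x_{T'} : (S,T)\text{ unsorted}\,\}$ is a Gr\"obner basis of $I_{V_d^{(r)}}$. Equivalently, the standard monomials are exactly the ``fully sorted'' products $x_{S^{(1)}}\cdots x_{S^{(k)}}$, and a count of these against the Hilbert function of $K[V_d^{(r)}]$ shows they form a $K$-basis of the quotient. This is established by verifying that the rewriting rule $x_Sx_T \rightsquigarrow x_{S'}x_{T'}$ is confluent, which by Buchberger's criterion reduces to resolving the overlap of two such rules inside a cubic monomial $x_Sx_Tx_U$; this is the classical three-letter sorting computation of \cite[Chapter 14]{Stu}, which we do not reproduce. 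Hence $I_{V_d^{(r)}}$ has a squarefree quadratic initial ideal; this confluence check is the step I expect to be the main obstacle to a fully self-contained proof.

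For $SV_d^{(r)}$ the argument is identical once one notes that sorting respects subsets. Index the variables by the $r$-element subsets $S \subseteq [d]$. For two such subsets, the multiset $S \cup T$ has every element with multiplicity at most two, so after sorting its entries into $u_1 \le \cdots \le u_{2r}$ any repeated value occupies two consecutive positions, one odd and one even; hence $S' = \{u_1,u_3,\ldots\}$ and $T' = \{u_2,u_4,\ldots\}$ are again $r$-element subsets and $x_Sx_T - x_{S'}x_{T'} \in I_{SV_d^{(r)}}$. With the corresponding sorting order these binomials again form a Gr\"obner basis by the same confluence check, with quadratic squarefree initial monomials. So $I_{SV_d^{(r)}}$ has a squarefree quadratic initial ideal as well, and in both cases the toric ring is normal by \cite[Proposition 13.15]{Stu}.
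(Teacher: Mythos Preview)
The paper does not give its own proof of this proposition: it is stated as a known fact with the citation ``(e.g., \cite[Chapter 14]{Stu})'' and no argument is supplied. Your proposal is correct and is precisely the sorting Gr\"obner basis argument from \cite[Chapter 14]{Stu} that the paper is citing, so there is nothing to compare.
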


We characterize such toric ideals that are generated by circuits.

\begin{Theorem}
Let $2 \leq d, r \in \ZZ$.
Then, we have the following:
\begin{itemize}
\item[(i)]
For $A= SV_d^{(r)}$, 
the toric ideal $I_A$ is generated by $\squarefreesquarefree$.
\item[(ii)]
For $A= V_d^{(r)}$,
the toric ideal $I_A$ is generated by $\squarefree$
if and only if $r = 2$.
\end{itemize}
\end{Theorem}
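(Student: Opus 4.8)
The plan is to settle (i) together with the ``if'' direction of (ii) in one stroke via Propositions~\ref{zeroone} and~\ref{Veronese}, and to reserve the real work for the ``only if'' direction of (ii). For (i): the matrix $SV_d^{(r)}$ is a $(0,1)$-matrix, so taking $z_i = 0$ for each $i$ shows it satisfies the hypothesis of the second assertion of Proposition~\ref{zeroone}; hence every quadratic binomial of $I_{SV_d^{(r)}}$ lies in $\squarefreesquarefree$. By Proposition~\ref{Veronese} the ideal $I_{SV_d^{(r)}}$ has a quadratic initial ideal and is therefore generated by binomials of degree $2$ (a reduced Gr\"obner basis works, its binomials having degree exactly $2$ since $I_{SV_d^{(r)}}$ has no degree-$1$ binomials). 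These two facts give $I_{SV_d^{(r)}} = \langle \squarefreesquarefree \rangle$. For the ``if'' direction of (ii): the entries of $V_d^{(2)}$ lie in $\{0,1,2\}$, so taking $z_i = 1$ shows it satisfies the hypothesis of the first assertion of Proposition~\ref{zeroone}; the identical argument, now with Proposition~\ref{Veronese} applied to $V_d^{(2)}$, yields $I_{V_d^{(2)}} = \langle \squarefree \rangle$.

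For the ``only if'' direction of (ii) I would show that for $A = V_d^{(r)}$ with $r \geq 3$ the inclusion $\langle \squarefree \rangle \subseteq I_A$ is strict, using the fine $\ZZ^d$-grading of $K[X]$ with $\deg x_{\bf a} = {\bf a}$ (a genuine grading, since the columns lie in the nonnegative orthant). Both $I_A$ and $\langle \squarefree \rangle$ are $\ZZ^d$-graded, because a circuit $X^{{\bf u}^+} - X^{{\bf u}^-}$ is homogeneous of degree $A{\bf u}^+ = A{\bf u}^-$. Put ${\bf v} = (3, 2r-3, 0, \dots, 0)$ and consider
\[
B \ = \ x_{(0,r,0,\dots,0)}\, x_{(3,r-3,0,\dots,0)} \ - \ x_{(1,r-1,0,\dots,0)}\, x_{(2,r-2,0,\dots,0)} ,
\]
a nonzero element of $I_A$ both of whose monomials have multidegree ${\bf v}$; here $r \geq 3$ guarantees that $(3,r-3,0,\dots,0)$ is a column. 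I claim $\langle \squarefree \rangle$ has zero component in multidegree ${\bf v}$. Since $|{\bf v}| = 2r$ and each column has coordinate-sum $r$, an element of $\squarefree$ of multidegree ${\bf v}$ must be a quadratic circuit; writing it $X^{{\bf u}^+} - X^{{\bf u}^-}$, the vanishing of the last $d-2$ coordinates of $A{\bf u}^{\pm}$ (sums of nonnegative terms) forces every column in its support to lie on the affine line $\{(t, r-t, 0, \dots, 0)\}$. A circuit with collinear support cannot have support of size $\geq 4$, since the toric ideal of $\geq 4$ distinct collinear points has height $\geq 2$, hence is not principal, while $I_A$ restricted to the variables of a circuit's support is principal; so its support has size exactly $3$ and it is a midpoint relation $x_{\bf p}x_{\bf q} - x_{\bf m}^2$ with $2{\bf m} = {\bf p}+{\bf q}$, of multidegree $2{\bf m} \neq {\bf v}$ because $\tfrac{1}{2}{\bf v} \notin \ZZ^d$. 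Thus no element of $\squarefree$ has multidegree ${\bf v}$, so the multidegree-${\bf v}$ part of $\langle \squarefree \rangle$ is $0$ while that of $I_A$ contains $B$, proving $I_A \neq \langle \squarefree \rangle$.

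The delicate point is the classification of the collinearly-supported quadratic circuits of $V_d^{(r)}$ as precisely the midpoint relations $x_{\bf p}x_{\bf q} - x_{\bf m}^2$: this rests on $k$ distinct collinear points carrying a rank-$(k-2)$ lattice of homogeneous relations (so only $k=3$ yields a principal, hence circuit-defining, restricted toric ideal), together with the observation that the primitive relation among three collinear points has, in degree $2$, the shape $(2,-1,-1)$ up to permutation and sign, because the positive and negative parts must be disjointly supported on a $3$-element set with equal $\ell^1$-norm $2$. Once this is in hand, the rest is $\ZZ^d$-graded bookkeeping: the multidegree-${\bf v}$ part of $\langle \squarefree \rangle$ is the $K$-span of the $\squarefree$-elements of exact multidegree ${\bf v}$ (there being no room for a factor of positive multidegree), which we have just seen is empty.
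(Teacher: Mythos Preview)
Your proof is correct and follows the same underlying idea as the paper: for (i) and the ``if'' half of (ii) you invoke Propositions~\ref{zeroone} and~\ref{Veronese} exactly as the paper does, and for the ``only if'' half you isolate the quadratic binomial of multidegree $(3,2r-3,0,\dots,0)$ and argue by the fine $\ZZ^d$-grading that it cannot lie in $\langle\squarefree\rangle$.

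The executions differ in two minor respects. First, the paper reduces to $d=2$ by observing that $K[V_2^{(r)}]$ is a combinatorial pure subring of $K[V_d^{(r)}]$ (so it suffices to show $I_{V_2^{(r)}}\neq\langle C_{V_2^{(r)}}\rangle$), whereas you work directly in $V_d^{(r)}$; your route avoids the citation to \cite{cpure} at the cost of carrying the trailing zero coordinates. Second, to rule out circuits in the target multidegree, the paper simply notes that $x_1x_4-x_2x_3$ is not a circuit (because $x_2^2-x_1x_3$ has strictly smaller support) and that it is indispensable (because $x_1x_4$ and $x_2x_3$ are the only monomials of multidegree $(2r-3,3)$), whereas you classify all quadratic circuits with collinear support via a height/rank count and then observe that the resulting midpoint relations $x_{\bf p}x_{\bf q}-x_{\bf m}^2$ have even multidegree $2{\bf m}\neq{\bf v}$. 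The paper's version is shorter and in fact yields the slightly stronger conclusion $I_A\neq\langle C_A\rangle$; your version is more self-contained and makes the graded structure explicit.
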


\begin{proof}
First, by Propositions \ref{zeroone} and \ref{Veronese},
(i) and the ``if" part of (ii) holds.

Let $r \geq 3$.
Since $K[V_2^{(r)}]$ is a combinatorial pure subring (see \cite{cpure} for details) of $K[V_d^{(r)}]$
for all $d > 2$, it is sufficient to show that
$I_{ V_2^{(r)} }$ is not generated by circuits.
Recall that the configuration $V_2^{(r)}$ is 
$
\begin{pmatrix}
r & r-1 & r-2 & r-3 & \cdots & 0\\
0 & 1  & 2 & 3 & \cdots & r
\end{pmatrix}.
$
Then the binomial $x_1 x_4 - x_2 x_3 \in I_{ V_2^{(r)} }$ is not a circuit 
since $x_2^2- x_1 x_3$ belongs to $ I_{ V_2^{(r)} }$.
Suppose that 
$0 \neq x_1 x_4 - x_i x_j$ belongs to $I_{ V_2^{(r)} }$.
Then ${\bf a}_i + {\bf a}_j = {\bf a}_1 + {\bf a}_4 = (2 r -3, 3)^\top $.
Since the last coordinate of ${\bf a}_i + {\bf a}_j$ is 3,
it follows that $\{i,j\}$ is either $\{1,4\}$ or $\{2,3\}$.
Hence $x_1 x_4 - x_i x_j = x_1 x_4 - x_2 x_3$.
Thus $x_1 x_4 - x_2 x_3 $ is not generated by other binomials in $ I_{ V_2^{(r)} }$
as desired.
\end{proof}

\subsection{Configurations arising from root systems}

For an integer $d \geq 2$, 
let $\Phi \subset \ZZ^d$ be one of the classical irreducible root systems
${\bf A}_{d - 1}$, ${\bf B}_d$, ${\bf C}_d$ and
${\bf D}_d$ (\cite[pp. 64 -- 65]{rootbook}) and write $\Phi^{(+)}$
for the set consisting of the origin
of $\RR^d$ together with all positive roots of $\Phi$.
More explicitly,
\begin{eqnarray*}
{\bf A}_{d-1}^{(+)}
&=& \{ {\bf 0} \} 
\cup 
\{ {\bf e}_i - {\bf e}_j  \ | \ 1 \leq  i < j \leq d\}\\
{\bf B}_d^{(+)}
&=& 
{\bf A}_{d-1}^{(+)}
\cup
\{ {\bf e}_1,\ldots, {\bf e}_d\}
\cup
\{ {\bf e}_i + {\bf e}_j  \ | \  1 \leq i < j \leq d\}\\
{\bf C}_d^{(+)}
&=& 
{\bf A}_{d-1}^{(+)}
\cup
\{ {\bf e}_i + {\bf e}_j  \ | \  1 \leq i \leq j \leq d\}\\
{\bf D}_d^{(+)}
&=& 
{\bf A}_{d-1}^{(+)}
\cup
\{ {\bf e}_i + {\bf e}_j  \ | \  1 \leq i < j \leq d\}
\end{eqnarray*}
where ${\bf e}_i$ is the $i$th unit coordinate vector of $\RR^d$ and 
${\bf 0}$ is the origin of $\RR^d$.
For each $\Phi^{(+)} \in 
\{{\bf A}_{d-1}^{(+)}, {\bf B}_d^{(+)} , {\bf C}_d^{(+)}, {\bf D}_d^{(+)}\}$, 
we identify $\Phi^{(+)}$ with the matrix whose columns are $\Phi^{(+)}$
and associate the configuration
$$
\widetilde{\Phi}^{(+)} = 
\left(
\begin{array}{ccc}
\\
 & \Phi^{(+)} & \\
\\
\hline
1 & \cdots & 1
\end{array}
\right).
$$

\begin{Proposition}[\cite{GGP}, \cite{quadroot}]
\label{rootnormal}
Working with the same notation as above.
Then the toric ideal $I_{\widetilde{\Phi}^{(+)}}$ has a squarefree quadratic 
initial ideal
and hence $K[\widetilde{\Phi}^{(+)}]$ is normal.
\end{Proposition}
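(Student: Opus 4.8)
The plan is to prove the stronger of the two assertions --- that $I_{\widetilde{\Phi}^{(+)}}$ has a squarefree quadratic initial ideal --- since the normality of $K[\widetilde{\Phi}^{(+)}]$ then follows at once from \cite[Proposition 13.15]{Stu}, recalled at the start of this section. Following \cite{GGP} and \cite{quadroot}, I would do this by writing down, in each of the four types, an explicit Gr\"obner basis $\Gc$ of $I_{\widetilde{\Phi}^{(+)}}$ whose initial monomials are squarefree quadrics. First set up notation: identify the variables of $K[X]$ with the elements of $\Phi^{(+)}$, writing $x_0$ for ${\bf 0}$, $x_{ij}$ for ${\bf e}_i-{\bf e}_j$ with $i<j$, $y_{ij}$ for ${\bf e}_i+{\bf e}_j$ with $i<j$, and the type-specific variables $y_i$ for ${\bf e}_i$ (type ${\bf B}_d$) and $y_{ii}$ for $2{\bf e}_i$ (type ${\bf C}_d$). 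Because the last row of $\widetilde{\Phi}^{(+)}$ is $(1,\dots,1)$, the ideal $I_{\widetilde{\Phi}^{(+)}}$ is homogeneous, and a quadratic binomial $X^{\bf u}-X^{\bf v}$ lies in it exactly when the two pairs of roots it records have the same sum in $\ZZ^d$.

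Next, take $\Gc$ to be the set of all quadratic binomials in $I_{\widetilde{\Phi}^{(+)}}$; concretely these arise from the additive relations $\alpha+\beta=\gamma+\delta$ among elements of $\Phi^{(+)}$, representative members being
\[
x_{ij}x_{jk}-x_{ik}x_0,\qquad x_{ij}x_{kl}-x_{il}x_{kj},\qquad y_{ij}x_{kj}-y_{ik}x_0,\qquad y_{ij}y_{kl}-y_{ik}y_{jl},
\]
together with $x_{ij}y_{ij}-y_i^2$ in type ${\bf B}_d$ and $x_{ij}y_{ij}-y_{ii}x_0$, $x_{ij}y_{il}-y_{ii}x_{lj}$ in type ${\bf C}_d$, and all admissible reorderings of the indices; in types ${\bf A}_{d-1}$ and ${\bf D}_d$ only the relevant subfamilies survive. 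I would then fix a reverse lexicographic term order $<$ in which $x_0$ is the smallest variable, $y_i$ and $y_{ii}$ are small, and the $y_{ij}$ are ordered ``by sorting''. With such a $<$ the initial monomial of each member of $\Gc$ is a product of two distinct variables --- e.g.\ ${\rm in}_{<}(x_{ij}x_{jk}-x_{ik}x_0)=x_{ij}x_{jk}$ and ${\rm in}_{<}(x_{ij}y_{ij}-y_i^2)=x_{ij}y_{ij}$ --- hence squarefree.

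It remains to check that $\Gc$ is a Gr\"obner basis of $I_{\widetilde{\Phi}^{(+)}}$ with respect to $<$, which I would do by Buchberger's criterion together with a standard-monomial count: two members of $\Gc$ overlap in at most one variable, so the $S$-polynomials fall into finitely many shapes, and each should reduce to $0$ modulo $\Gc$ in a bounded number of steps, while comparing in each degree the number of monomials not divisible by any ${\rm in}_{<}(g)$ with $\dim_K K[\widetilde{\Phi}^{(+)}]$ confirms that $\langle\Gc\rangle$ is all of $I_{\widetilde{\Phi}^{(+)}}$. Note that the part of $\Gc$ in the variables $x_0$ and $x_{ij}$ already settles type ${\bf A}_{d-1}$, so in the larger types only the $S$-pairs that bring in $y_{ij}$, $y_i$ or $y_{ii}$ need fresh attention. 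The main obstacle is precisely this bookkeeping: one must make sure the list is \emph{complete} --- that no quadratic relation among four elements of $\Phi^{(+)}$ has been overlooked, in particular none mixing ${\bf e}_i-{\bf e}_j$ with ${\bf e}_i+{\bf e}_j$ or involving the special vectors ${\bf e}_i$ and $2{\bf e}_i$ --- and that a single term order can be chosen so that \emph{all} the binomials in $\Gc$ have squarefree leading terms \emph{and} every $S$-polynomial closes up, uniformly across the four root systems; the remaining step, normality via \cite[Proposition 13.15]{Stu}, is then formal.
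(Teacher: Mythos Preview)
The paper does not actually prove this proposition: it is stated as a citation of \cite{GGP} and \cite{quadroot}, with no argument given, and the text moves directly to the next corollary. So there is no ``paper's own proof'' to compare against; the paper simply imports the result from the literature.

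Your sketch is a faithful outline of what those references do --- write down the quadratic binomials coming from the additive relations among positive roots, choose a sorting-type reverse lexicographic order, and verify the Gr\"obner-basis property --- and you are candid that the substance lies in the bookkeeping you have not carried out. Two small remarks on that bookkeeping. First, Buchberger's criterion and the standard-monomial count are alternative routes, not complementary ones: either all $S$-pairs reduce to zero (and then you are done), or you compare the number of $<$-standard monomials with the Hilbert function of $K[\widetilde{\Phi}^{(+)}]$ (which you would then need to compute independently); mixing the two does not shorten the work. Second, your remark that ``two members of $\Gc$ overlap in at most one variable'' is not quite enough to control the $S$-polynomial computation: quadratic binomials sharing one variable produce cubic $S$-polynomials whose reductions can cascade, and in types ${\bf B}_d$ and ${\bf C}_d$ the interaction between the $x$-variables and the special variables $y_i$, $y_{ii}$ requires a careful choice of order (in \cite{quadroot} this is handled by a specific ordering, not a generic revlex). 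None of this is a genuine error in your plan, but it is exactly where the actual work in \cite{quadroot} lives.
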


By Proposition \ref{zeroone}, we have the following.

\begin{Corollary}
If $A \in 
\left\{
\widetilde{\bf A}_{d-1}^{(+)}, \widetilde{\bf B}_d^{(+)} ,
\widetilde{\bf C}_d^{(+)} , \widetilde{\bf D}_d^{(+)}
\right\}$, then
$I_A$ is generated by quadratic binomials in $\squarefree$.
\end{Corollary}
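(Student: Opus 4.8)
The plan is to combine Propositions \ref{rootnormal} and \ref{zeroone}. By Proposition \ref{rootnormal}, $I_A$ has a squarefree quadratic initial ideal; and since the last row of $\widetilde{\Phi}^{(+)}$ is $(1,\ldots,1)$, the ideal $I_A$ is homogeneous for the grading $\deg x_i = 1$, so the elements of its reduced Gr\"obner basis are homogeneous binomials whose leading monomials are the (quadratic) minimal generators of the initial ideal, hence are themselves quadratic. Thus $I_A$ is generated by quadratic binomials, and it remains to show that \emph{every} quadratic binomial of $I_A$ lies in $\squarefree$. For that I want to apply Proposition \ref{zeroone}; note that replacing $A$ by $UA$ with $U \in GL_{d+1}(\ZZ)$ changes neither $I_A$ nor $C_A$ nor $\squarefree$ (all are determined by ${\rm Ker}_\ZZ(A)$, which is invariant under invertible row operations), so it suffices to verify the hypothesis $z_i - 1 \le a_{ij} \le z_i + 1$ of Proposition \ref{zeroone} either for $A$ or for some matrix obtained from $A$ by $\ZZ$-invertible row operations.

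For $A \in \{\widetilde{\bf A}_{d-1}^{(+)}, \widetilde{\bf B}_d^{(+)}, \widetilde{\bf D}_d^{(+)}\}$ this is immediate: each of the vectors ${\bf 0}$, ${\bf e}_i$, ${\bf e}_i \pm {\bf e}_j$ has all entries in $\{-1,0,1\}$, so the first $d$ rows of $A$ satisfy the bound of Proposition \ref{zeroone} with $z_i = 0$, while the last row is constant $1$ (take $z_{d+1} = 1$). Hence every quadratic binomial of $I_A$ lies in $\squarefree$.

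For $A = \widetilde{\bf C}_d^{(+)}$ the hypothesis is \emph{not} satisfied as written, since in row $i$ the column $2{\bf e}_i$ gives an entry $2$ while each column ${\bf e}_k - {\bf e}_i$ with $k < i$ gives an entry $-1$. Here I would first apply the lower unitriangular row operation $U \in GL_{d+1}(\ZZ)$ replacing, for $i = 1, \ldots, d$, row $i$ by $(\text{row }1) + \cdots + (\text{row }i)$ and fixing the last row. On a column with root part $\beta$, the new $i$th entry is the partial sum $\beta_1 + \cdots + \beta_i$, and checking the four cases $\beta \in \{{\bf 0},\ {\bf e}_a - {\bf e}_b\ (a < b),\ {\bf e}_a + {\bf e}_b\ (a < b),\ 2{\bf e}_a\}$ shows every such partial sum lies in $\{0,1,2\}$. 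So $UA$ satisfies the hypothesis of Proposition \ref{zeroone} with $z_i = 1$ for all $i$, and again every quadratic binomial of $I_A = I_{UA}$ lies in $\squarefree$.

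Putting this together with the first paragraph proves the corollary. The only step that is not completely routine is noticing that $\widetilde{\bf C}_d^{(+)}$ fails the hypothesis of Proposition \ref{zeroone} on the nose and that the ``partial sum'' row operation fixes this without disturbing $I_A$ or $\squarefree$; the ${\bf A}$, ${\bf B}$, ${\bf D}$ cases need no such adjustment.
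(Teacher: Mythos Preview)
Your proof is correct and follows essentially the same strategy as the paper: handle $\widetilde{\bf A}_{d-1}^{(+)}$, $\widetilde{\bf B}_d^{(+)}$, $\widetilde{\bf D}_d^{(+)}$ directly as $(0,\pm 1)$-configurations, and for $\widetilde{\bf C}_d^{(+)}$ apply invertible row operations to obtain a $(0,1,2)$-configuration before invoking Proposition~\ref{zeroone}. The only difference is cosmetic: the paper first modifies the last row (using a real, not integral, operation) to get $(1,\ldots,1\,|\,0,\ldots,0)$ and then adds it to the upper rows, whereas your lower-unitriangular ``partial sum'' transformation is an integral alternative achieving the same $(0,1,2)$ bound.
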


\begin{proof}
Since $\widetilde{\bf A}_{d-1}^{(+)}$, $\widetilde{\bf B}_d^{(+)}$ and
 $\widetilde{\bf D}_d^{(+)}$
are $(0,\pm 1)$ configurations, 
by Propositions \ref{zeroone} and \ref{rootnormal},
$I_A$ is generated by quadratic binomials in $\squarefree$
if $A \in 
\left\{
\widetilde{\bf A}_{d-1}^{(+)}, \widetilde{\bf B}_d^{(+)} ,
 \widetilde{\bf D}_d^{(+)}
\right\}$.

Let $A =  \widetilde{\bf C}_d^{(+)}$.
By elementary row operations,
one can transform
the matrix $A$
as follows:
$$
A
\longrightarrow 
\left(
\begin{array}{ccc|ccc}
 &  & & &  & \\
 & {\bf A}_{d-1}^{(+)} & & & P & \\
 &  & & &  & \\
\hline
1 & \cdots & 1 & 0 & \cdots & 0
\end{array}
\right)
\longrightarrow 
\left(
\begin{array}{ccc|ccc}
 &  & & &  & \\
 & {\bf A}_{d-1}^{(+)} + {\bf 1} & & & P & \\
 &  & & &  & \\
\hline
1 & \cdots & 1 & 0 & \cdots & 0
\end{array}
\right) =Q
$$
where ${\bf 1}$ is the matrix with all entries equal to one
and
$P$ is the matrix whose columns are 
$\{ {\bf e}_i + {\bf e}_j  \ | \  1 \leq i \leq j \leq d\}$.
Since $Q$ is a $(0,1,2)$-configuration,
$I_Q = I_A$
is generated by quadratic binomials in $\squarefree$
by Propositions \ref{zeroone} and \ref{rootnormal}.
\end{proof}

\section{Configurations arising from graphs}

\label{graphsection}

In this section, we study toric ideals arising from graphs.
First, we introduce some graph terminology.
A {\em walk} of $G$ of length $q$ is a sequence 
$\Gamma=(e_{i_1}, e_{i_2}, \ldots, e_{i_q})$
of edges of $G$, 
where 
$e_{i_k} = \{ u_k, v_k \}$
for $k = 1, \ldots, q$, such that
$v_k = u_{k+1}$ for $k = 1, \ldots, q - 1$. 
Then,
\begin{itemize}
\item
A walk $\Gamma$ is called a {\em path}
if $\sharp |\{u_1,\ldots,u_q,v_q\}|=q+1$.
\item
A walk $\Gamma$ is called a {\em closed walk}
if $v_{q} = u_{1}$.
\item
A walk $\Gamma$ is called a {\em cycle}
if $v_{q} = u_{1}$,
$q \geq 3$ and
$\sharp |\{u_1,\ldots,u_q\}|=q$.
\end{itemize}
For a cycle $\Gamma=(e_{i_1}, e_{i_2}, \ldots, e_{i_q})$,
an edge $e=\{ s,t  \}$ of $G$
is called a {\em chord} of $\Gamma$ if 
$s$ and $t$ are vertices of $\Gamma$ and 
if $e \notin \{e_{i_1}, e_{i_2}, \ldots, e_{i_q}\}   $.
A cycle $\Gamma$ is called {\em minimal} if 
$\Gamma$ has no chord.
Let $\Gamma = (e_{i_1}, e_{i_2}, \ldots, e_{i_{2 q}})$
where 
$e_{i_k} = \{ u_k, v_k \}$
for $k = 1, \ldots, 2 q$ be an even closed walk of $G$.
Then it is easy to see that the binomial
$$f_{\Gamma} = \prod_{\ell=1}^q x_{u_{2 \ell -1} v_{2 \ell -1}} -  \prod_{\ell=1}^q x_{u_{2 \ell} v_{2 \ell}}$$
belongs to $I_{A_G}$.
Circuits of $I_{A_G}$ are characterized
in terms of graphs (see, e.g., \cite[Lemma 9.8]{Stu}).

\begin{Proposition}
\label{circuitwalk}
Let $G$ be a finite connected graph.
Then $f \in C_{A_G}$ if and only if 
$f = f_\Gamma$ for some even closed walk $\Gamma$
which is one of the following even closed walk:
\begin{itemize}
\item[(i)]
$\Gamma$ is an even cycle of $G$;
\item[(ii)]
$\Gamma = (C_1,C_2)$, where $C_1$ and $C_2$ are odd cycles of $G$
having exactly one common vertex;
\item[(iii)]
$\Gamma = (C_1,e_1,\ldots,e_r ,C_2,e_r,\ldots, e_1)$,
where $C_1$ and $C_2$ are odd cycles of $G$
having no common vertex and where $(e_1,\ldots,e_r)$
is a path of $G$ which combines a vertex of $C_1$
and a vertex of $C_2$.
\end{itemize}
In particular, $f \notin \squarefreeG$ if and only if 
$\Gamma$ satisfies {\rm (iii)} and $r >1$.
\end{Proposition}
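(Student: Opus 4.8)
The plan is to establish the final sentence of Proposition \ref{circuitwalk} --- namely that, among the circuits described in (i)--(iii), the ones failing to lie in $\squarefreeG$ are exactly those of type (iii) with $r>1$ --- by directly examining the shape of $f_\Gamma$ in each of the three cases. The key observation is that for an even closed walk $\Gamma$ traversing an edge $e$ exactly $m$ times, the variable $x_e$ appears with total exponent $m$ in $f_\Gamma$, split between the two monomials according to the parity of the positions at which $e$ is traversed. So a monomial of $f_\Gamma$ is squarefree precisely when no edge of $G$ is traversed twice ``on the same side'' of the alternating pattern.

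First I would dispose of cases (i) and (ii). In case (i), $\Gamma$ is an even cycle, so every edge is traversed exactly once; hence each $x_e$ occurs with exponent $1$ in exactly one of the two monomials, and both $X^{\bf u}$ and $X^{\bf v}$ are squarefree. Thus $f_\Gamma\in\squarefreesquarefreeG\subset\squarefreeG$. In case (ii), $\Gamma=(C_1,C_2)$ with $C_1,C_2$ odd cycles meeting in a single vertex; again every edge of $C_1\cup C_2$ is used exactly once along $\Gamma$, so the same argument shows $f_\Gamma$ has both monomials squarefree and lies in $\squarefreeG$. (One should note that the common vertex forces $\Gamma$ to be a genuine closed walk of even length $|C_1|+|C_2|$, and that the alternation of the two monomials is well-defined regardless of where one starts reading the walk; changing the starting point only swaps $X^{\bf u}$ and $X^{\bf v}$.)

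Next I would handle case (iii): $\Gamma=(C_1,e_1,\ldots,e_r,C_2,e_r,\ldots,e_1)$. Here the bridge edges $e_1,\ldots,e_r$ are each traversed exactly twice, while all edges of $C_1$ and $C_2$ are traversed once. The whole point is to track the positions (even vs.\ odd) at which a bridge edge $e_j$ is hit on its two passes. Because $C_1$ has odd length, the number of edges traversed strictly before the first pass along $e_1,\ldots,e_r$ is $|C_1|$ (odd), and then one returns along the same bridge after also traversing $C_2$ (odd length) in between; a short parity count shows that each $e_j$ is traversed once at an odd position and once at an even position when $r=1$, but that the parities on the two passes of $e_j$ \emph{coincide} when $r>1$ --- more precisely, on the return pass the position of $e_j$ differs from its forward position by $2(r-j)+1+|C_2|$ plus the forward offset, and the crucial quantity controlling whether both copies land on the same monomial is $r-1+|C_2|$, which is even (since $|C_2|$ is odd) exactly when $r$ is odd... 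I would just carefully compute the position parities directly. The clean way: the forward traversal of the path occupies positions $|C_1|+1,\ldots,|C_1|+r$, and the backward traversal occupies positions $|C_1|+r+|C_2|+1,\ldots,|C_1|+2r+|C_2|$, with $e_j$ at forward position $|C_1|+j$ and backward position $|C_1|+2r+|C_2|+1-j$. These two positions have the same parity iff $(2r+|C_2|+1-j)-j=2r+|C_2|+1-2j$ is even, i.e.\ iff $|C_2|$ is odd, which always holds. Wait --- this would say the two passes always land on the same side, which contradicts the $r=1$ claim, so I must recheck the indexing of the return path; the correct ordering is $(C_2,e_r,e_{r-1},\ldots,e_1)$ after which we close up to $u_1$, and it is the interleaving with $C_1$'s odd length at the \emph{front} that must be folded in. This bookkeeping is the main obstacle, and I would resolve it by fixing explicit vertex labels $u_1,\ldots$ along the whole walk and literally reading off which $x_{u_{2\ell-1}v_{2\ell-1}}$ versus $x_{u_{2\ell}v_{2\ell}}$ each bridge edge contributes to.

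Once the position parities are pinned down, the conclusion is immediate: when $r=1$ the single bridge edge $e_1$ contributes one factor to each monomial, every other edge is squarefree, so $f_\Gamma$ has both monomials squarefree and lies in $\squarefreeG$ (indeed in $\squarefreesquarefreeG$); when $r>1$, there is at least one bridge edge $e_j$ (in fact, by the parity pattern, \emph{some} bridge edge --- one checks $e_1$ and $e_r$ cannot both be balanced, or simply that consecutive bridge edges alternate in which monomial gets the doubled factor, so whenever $r\ge 2$ at least one of them is doubled on one side) whose two passes fall on the same monomial, forcing $x_{e_j}^2$ to divide that monomial; since the other monomial does not contain $x_{e_j}$ at all, neither monomial is squarefree, so $f_\Gamma\notin\squarefreeG$. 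Combining the three cases gives exactly the stated equivalence: $f\notin\squarefreeG$ iff $\Gamma$ is of type (iii) with $r>1$.
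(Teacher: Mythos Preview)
The paper does not prove Proposition~\ref{circuitwalk}; it is quoted as a known result (\cite[Lemma 9.8]{Stu}), and the ``In particular'' sentence is presented without argument as an immediate consequence. So there is no proof in the paper to compare against, but your plan of reading off the exponents in $f_\Gamma$ from the walk is exactly how one verifies that final sentence.

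Your parity computation is in fact \emph{correct}, and you should trust it rather than retreat from it. The forward position $|C_1|+j$ and backward position $|C_1|+2r+|C_2|+1-j$ of $e_j$ differ by $2r+|C_2|+1-2j$, which is even since $|C_2|$ is odd; hence for every $r\ge 1$ and every $j$, the two traversals of $e_j$ land in the \emph{same} monomial, contributing a factor $x_{e_j}^2$ there. This does \emph{not} contradict the $r=1$ conclusion, because membership in $\squarefreeG$ requires only that \emph{one} of the two monomials be squarefree. When $r=1$, the single square $x_{e_1}^2$ sits in one monomial while the other monomial is built entirely from once-traversed cycle edges and is squarefree; thus $f_\Gamma\in\squarefreeG$ (but $f_\Gamma\notin\squarefreesquarefreeG$, contrary to your parenthetical claim). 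When $r\ge 2$, the observation you made at the end is the decisive one: since $|C_1|$ is odd, the parity of the position $|C_1|+j$ alternates with $j$, so $x_{e_1}^2$ and $x_{e_2}^2$ land in \emph{different} monomials, and neither is squarefree; hence $f_\Gamma\notin\squarefreeG$.

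So the genuine error is the sentence ``when $r=1$ the single bridge edge $e_1$ contributes one factor to each monomial'' and the surrounding ``Wait'' paragraph doubting your own calculation. Remove that, correct the $r=1$ case as above, and the argument is complete.
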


Moreover,
it is known \cite[Lemma 3.2]{quadgene} that 

\begin{Proposition}
\label{primitive}
Let $G$ be a finite connected graph.
Then $I_{A_G}$ is generated by all $f_\Gamma$
where $\Gamma$ is one of the following even closed walk:
\begin{itemize}
\item[(i)]
$\Gamma$ is an even cycle of $G$;
\item[(ii)]
$\Gamma = (C_1,C_2)$, where $C_1$ and $C_2$ are odd cycles of $G$
having exactly one common vertex;
\item[(iii)]
$\Gamma = (C_1,\Gamma_1,C_2,\Gamma_2)$,
where $C_1$ and $C_2$ are odd cycles of $G$
having no common vertex and where $\Gamma_1$ and $\Gamma_2$
are walks of $G$ both of which combine a vertex $v_1$ of $C_1$
and a vertex $v_2$ of $C_2$.
\end{itemize}
\end{Proposition}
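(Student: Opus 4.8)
I would argue by induction on the length of the even closed walk, reducing a general $f_\Gamma$ to the three listed types. The starting point is the well known fact that $I_{A_G}$ is generated by the binomials $f_\Gamma$ as $\Gamma$ ranges over all even closed walks of $G$: a generator $X^{{\bf u}^+}-X^{{\bf u}^-}$ with ${\bf u}\in{\rm Ker}_\ZZ(A_G)$ corresponds to a pair of edge-multisets (the supports, with multiplicity, of ${\bf u}^+$ and ${\bf u}^-$) that meet every vertex the same number of times, and any such balanced pair can be organized as the odd- and even-numbered edge lists of a disjoint union of even closed walks. Hence it suffices to show that $f_\Gamma\in\langle f_{\Gamma'}\mid\Gamma'\text{ of type (i), (ii) or (iii)}\rangle$ for every even closed walk $\Gamma$.

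If $\Gamma$ is of one of the three types, or if $f_\Gamma=0$ (in particular if $\Gamma$ has length $2$), there is nothing to prove. Since an even cycle is of type (i), in the remaining case $\Gamma$ is not a cycle, so some vertex $v$ occurs at two distinct spots along $\Gamma$; cutting there writes $\Gamma=(\Delta_1,\Delta_2)$ as a concatenation of two shorter nontrivial closed walks based at $v$. Suppose first that $\ell(\Delta_1)$ is even, hence so is $\ell(\Delta_2)$. Then a position inside $\Gamma$ has the same parity as the corresponding position in $\Delta_1$ or in $\Delta_2$, so, writing $f_{\Delta_1}=m_1-m_1'$ and $f_{\Delta_2}=m_2-m_2'$ (products of the variables indexed by the odd- resp.\ even-numbered edges), one obtains
\[
f_\Gamma=m_1m_2-m_1'm_2'=m_2\,f_{\Delta_1}+m_1'\,f_{\Delta_2},
\]
where $f_{\Delta_1},f_{\Delta_2}\in I_{A_G}$ come from strictly shorter even closed walks; the induction hypothesis finishes this case.

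The crux is the case in which every cut at a repeated vertex splits $\Gamma$ into two odd closed walks $\Delta_1,\Delta_2$ based at $v$. Every odd closed walk contains an odd cycle (decompose it into cycles by repeatedly cutting at repeated vertices; the lengths sum to an odd number, so a genuine cycle among them is odd), so I would write $\Delta_1=(\Pi_1,C_1,\Pi_1')$, where $C_1$ is an odd cycle through a vertex $w_1$ and $\Pi_1,\Pi_1'$ are walks from $v$ to $w_1$, and similarly $\Delta_2=(\Pi_2,C_2,\Pi_2')$ with $C_2$ an odd cycle through $w_2$. Then $\Gamma=(\Pi_1,C_1,\Pi_1',\Pi_2,C_2,\Pi_2')$, and a cyclic rotation---which changes $f_\Gamma$ at most by a sign---brings $\Gamma$ into the shape $(C_1,\Pi_1',\Pi_2,C_2,\Pi_2',\Pi_1)$. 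When $C_1$ and $C_2$ are vertex-disjoint this is exactly a walk of type (iii), with connecting walks $(\Pi_1',\Pi_2)$ and $(\Pi_2',\Pi_1)$, so $f_\Gamma$ is, up to sign, one of the listed generators. When $C_1$ and $C_2$ are not vertex-disjoint, an analogous argument (rerouting $\Gamma$ through a shared vertex) reduces $f_\Gamma$ either to type (ii) or to binomials of strictly shorter even closed walks.

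The delicate point---and the reason the statement is not purely formal---is the bookkeeping in this last case: position-parities behave differently for sub-walks that straddle the seam at which $\Gamma$ closes up, degenerate length-two stretches $(e,e)$ contribute $0$ to the binomial and must be carried along separately, and one must verify that every identity invoked uses only even closed walks that are either strictly shorter or already of type (i)--(iii). Making all of this precise is exactly \cite[Lemma 3.2]{quadgene}. Alternatively, one may invoke that the primitive binomials---the Graver basis of $I_{A_G}$---generate $I_{A_G}$ and then run the same case analysis to show that a primitive $f_\Gamma$ forces $\Gamma$ to be of type (i), (ii) or (iii).
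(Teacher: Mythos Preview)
The paper does not supply its own proof of this proposition: it is quoted verbatim as a known result, with the sole justification being the citation ``\cite[Lemma~3.2]{quadgene}''. So there is no in-paper argument to compare against. Your sketch is the standard inductive reduction and is essentially the argument of that reference, which you yourself invoke at the end; in that sense your proposal and the paper agree, since both ultimately point to \cite[Lemma~3.2]{quadgene}.

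On the substance of your outline: the even/even split and the identity $f_\Gamma=m_2 f_{\Delta_1}+m_1' f_{\Delta_2}$ are correct, as is the observation that a contiguous odd cycle can be isolated inside each odd closed sub-walk so that a cyclic rotation exhibits $\Gamma$ in the shape $(C_1,\Gamma_1,C_2,\Gamma_2)$. The one place your sketch is genuinely incomplete is the case where the extracted odd cycles $C_1$ and $C_2$ meet: ``rerouting through a shared vertex'' is not by itself enough, because cutting $\Gamma$ at a common vertex $u$ of $C_1$ and $C_2$ need not produce an even/even split, and one must instead argue (using the two arcs of $C_1$ and of $C_2$ determined by $u$) that $f_\Gamma$ lies in the ideal generated by an $f_{\Gamma'}$ of type~(ii) together with binomials coming from strictly shorter even closed walks. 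You flag this gap honestly; filling it is precisely the content of the cited lemma, so your proposal is acceptable as a proof-by-reference with commentary, which is exactly what the paper does.
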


See also \cite{RTT} for characterization
of generators of $I_{A_G}$.
The normality of $K[A_G]$ is characterized
in terms of graphs.

\begin{Proposition}[\cite{normalOH}]
\label{oddcyclecondition}
Let $G$ be a finite connected graph.
Then $K[A_G]$ is normal if and only if $G$ satisfies the odd cycle condition,
i.e., for an arbitrary two odd cycles $C_1$ and $C_2$ in $G$ without common vertex,
there exists an edge of $G$ joining a vertex of $C_1$ with a vertex of $C_2$.
\end{Proposition}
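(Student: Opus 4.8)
The plan is to reduce the statement to the standard fact that $K[A_G]$ is normal if and only if the affine semigroup $S:=\ZZ_{\geq 0}A_G$ generated by the columns $\rho(e)$ of $A_G$ coincides with its saturation $\overline S:=\ZZ A_G\cap\QQ_{\geq 0}A_G$ (see, e.g., \cite[\S 13]{Stu}), and then to treat the two implications separately. Throughout I use the elementary facts, obtained by choosing alternate edges along a cycle, that for an even cycle $C$ one has $\mathbf{1}_{V(C)}:=\sum_{v\in V(C)}\mathbf{e}_v\in S$, that for an odd cycle $C$ and any vertex $w$ of $C$ one has $\sigma_C^w:=\mathbf{1}_{V(C)}-\mathbf{e}_w\in S$ and $\tau_C^w:=\mathbf{1}_{V(C)}+\mathbf{e}_w\in S$ (respectively the perfect matching of $C-w$ and the alternation through $w$), and that $\sum_{e\in C}\rho(e)=2\,\mathbf{1}_{V(C)}$ for any cycle $C$.

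For the ``only if'' part I argue by contraposition. Suppose $G$ has odd cycles $C_1,C_2$ with $V(C_1)\cap V(C_2)=\emptyset$ joined by no edge of $G$, and set $\beta=\mathbf{1}_{V(C_1)}+\mathbf{1}_{V(C_2)}$. Then $\beta\in\QQ_{\geq 0}A_G$ because $\beta=\tfrac12\sum_{e\in C_1}\rho(e)+\tfrac12\sum_{e\in C_2}\rho(e)$, and $\beta\in\ZZ A_G$ because $\beta=\sigma_{C_1}^{v_1}+\sigma_{C_2}^{v_2}+(\mathbf{e}_{v_1}+\mathbf{e}_{v_2})$ for any $v_i\in V(C_i)$, where $\mathbf{e}_{v_1}+\mathbf{e}_{v_2}\in\ZZ A_G$ since a path of $G$ from $v_1$ to $v_2$ (it exists, as $G$ is connected) telescopes to $\mathbf{e}_{v_1}\pm\mathbf{e}_{v_2}$ and $2\mathbf{e}_{v_2}=\tau_{C_2}^{v_2}-\sigma_{C_2}^{v_2}\in\ZZ A_G$. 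But $\beta\notin S$: in any expression $\beta=\sum_e m_e\rho(e)$ with $m_e\in\ZZ_{\geq 0}$ no cancellation occurs, so every $e$ with $m_e>0$ lies inside $V(C_1)$ or inside $V(C_2)$, and restriction to the coordinates in $V(C_1)$ gives $\mathbf{1}_{V(C_1)}=\sum m_e\rho(e)$, whose coordinate sum is the even number $2\sum m_e$ --- impossible since $|V(C_1)|$ is odd. Hence $S\subsetneq\overline S$ and $K[A_G]$ is not normal.

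For the ``if'' part, assume the odd cycle condition and take $\beta\in\overline S$, written as $\beta=\sum_e c_e\rho(e)$ with $c_e\in\QQ_{\geq 0}$; subtracting $\sum_e\lfloor c_e\rfloor\rho(e)\in S$ we may assume $0\leq c_e<1$. The core step is a normalization of the fractional support $F=\{e:c_e>0\}$: whenever $F$ contains the edge set of a circuit of $I_{A_G}$ --- whose shapes are listed in Proposition \ref{circuitwalk} --- adding a suitable scalar multiple of the corresponding vector of $\mathrm{Ker}_\ZZ(A_G)$ to $(c_e)_e$ fixes $\beta$ but sends some coefficient to an integer (re-extracting integer parts as needed), so $|F|$ strictly decreases; hence after finitely many steps $F$ contains no circuit of $I_{A_G}$. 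Then a short argument --- a degree-one vertex of $F$ would force the coefficient of its unique edge to be an integer, which is impossible as that coefficient lies in $(0,1)$, so $F$ has minimum degree $\geq 2$; thus each component of $F$ contains a cycle, which must be odd (an even cycle is itself a circuit) and which, since two odd cycles inside one component would again form a circuit, must be the whole component; and on an odd cycle the system expressing the vertex-sums has coefficient matrix of determinant $\pm 2$, pinning each coefficient into $\tfrac12\ZZ\cap(0,1)=\{\tfrac12\}$ --- shows that after these reductions $\beta=(\text{an element of }S)+\sum_Z\mathbf{1}_{V(Z)}$ for certain pairwise vertex-disjoint odd cycles $Z$ of $G$. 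Since $\beta\in\ZZ A_G$, the integer $\sum_v\beta_v$ is even; as it differs from $\sum_Z|V(Z)|$ by an even amount and each $|V(Z)|$ is odd, the number of these cycles is even, so we may pair them. For a pair $(Z_1,Z_2)$ the odd cycle condition provides an edge $g=\{a,b\}\in E(G)$ with $a\in V(Z_1)$ and $b\in V(Z_2)$, and then $\mathbf{1}_{V(Z_1)}+\mathbf{1}_{V(Z_2)}=\sigma_{Z_1}^a+\sigma_{Z_2}^b+\rho(g)\in S$. Summing all contributions gives $\beta\in S$, so $\overline S=S$ and $K[A_G]$ is normal.

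The hard part is this ``if'' direction, and within it the normalization of the fractional support: one must verify that each flow-pushing step genuinely decreases $|F|$ without producing negative coefficients, and that the terminal configurations are exactly vertex-disjoint odd cycles with coefficient $\tfrac12$. Here the circuit classification of Proposition \ref{circuitwalk} together with the simple degree argument do the work. The pairing step is then immediate, but I would stress that the parity count forcing an even number of odd cycles --- and hence making the pairing possible at all --- is precisely where membership of $\beta$ in the lattice $\ZZ A_G$, not merely in the cone $\QQ_{\geq 0}A_G$, is essential.
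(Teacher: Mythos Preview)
The paper does not prove this proposition at all: it is quoted from \cite{normalOH} and used as a black box, so there is no ``paper's own proof'' to compare against. Your argument is a correct outline of the standard proof from that reference, using the semigroup criterion $S=\overline S$.

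A few places in the ``if'' direction are compressed and deserve one line of care, though none is a genuine gap. First, the assertion that ``two odd cycles inside one component would again form a circuit'' tacitly assumes the cycles meet in at most one vertex; the clean way to phrase the whole reduction is matroidal: ``$F$ contains no circuit of $I_{A_G}$'' means exactly that the columns $\{\rho(e):e\in F\}$ are linearly independent, and for an incidence matrix this is equivalent to each connected component of $F$ containing at most one cycle, necessarily odd. Combined with your minimum-degree-$2$ observation this forces each component to be a single odd cycle, without case analysis on how two cycles might overlap. Second, in the flow-pushing step you should say explicitly why the process terminates with all coefficients nonnegative: moving $\lambda$ continuously from $0$, the first value at which some $c_e+\lambda u_e$ becomes an integer occurs before any coefficient can become negative (a coefficient in $(0,1)$ must pass through $0$ to become negative), and since $\mathrm{supp}(\mathbf u)\subset F$ no edge outside $F$ changes, so $|F|$ strictly drops. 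Finally, the parity count at the end uses that every element of $\ZZ A_G$ has even coordinate sum (each $\rho(e)$ has coordinate sum $2$); you invoke this implicitly, and it is exactly the point where the lattice condition, not just the cone condition, enters---as you rightly emphasize.

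The ``only if'' direction is clean; the telescoping path together with $2\mathbf e_{v_2}=\tau_{C_2}^{v_2}-\sigma_{C_2}^{v_2}$ indeed shows $\beta\in\ZZ A_G$, and the parity obstruction shows $\beta\notin S$.
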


Let $A =  ({\bf a}_1, \ldots, {\bf a}_n) \in \ZZ^{d \times n}$
be a configuration.
Given binomial $f = u-v \in I_A$,
we write $T_f$ for the set of those
variables $t_i$ such that $t_i$ divides
$\pi(u) (= \pi(v) )$.
Let $K[T_f ] = K[\{t_i \ | \ t_i \in T_f \}]$ and
let
$A_f$ be the matrix whose columns are
$\{ {\bf a}_i \ | \ T^{{\bf a}_i} \in K[T_f]\}$.
The toric ideal $I_{A_f}$ coincides with
$I_A \cap K[\{x_i \ | \ \pi(x_i) \in K[T_f ]\}]$.
A binomial $f \in I_A$ is called {\em fundamental} if
$I_{A_f}$ is generated by $f$.
A binomial $f \in I_A$ is called
{\em indispensable} if, 
for any system of binomial
generators $F$ of $I_A$, either $f$ or $-f$ belongs to $F$.
A binomial $f \in I_A$ is called
{\em not redundant} if $f$ belongs to a minimal system of 
binomial generators of $I_A$.
Given binomial $ f  \in I_A$, 
it is known \cite{OHtable} that 
\begin{itemize}
\item
$f$ is fundamental $ \Longrightarrow $ $f$ is a circuit
\item
$f$ is fundamental $ \Longrightarrow $
$f$ is indispensable $ \Longrightarrow $
$f$ is not redundant
\end{itemize}
hold in general.

We give a characterization of toric ideals of graphs
generated by $\squarefreeG$. 

\begin{Theorem}
\label{graphcircuit}
Let $G$ be a finite connected graph.
Then the following conditions are equivalent:
\begin{itemize}
\item[(i)]
$I_{A_G} = \langle \squarefreeG \rangle$;
\item[(ii)]
Any circuit in $C_{A_G} \setminus \squarefreeG$ is
redundant;
\smallskip
\item[(iii)]
Any circuit in $C_{A_G} \setminus \squarefreeG$ is
not indispensable;
\smallskip
\item[(iv)]
Any circuit in $C_{A_G} \setminus \squarefreeG$ is
not fundamental;
\item[(v)]
There exists no induced subgraph of $G$
consisting of two odd cycles $C_1$, $C_2$ 
having no common vertex and 
a path of length $\geq 2$
which connects a vertex of $C_1$ and 
a vertex of $C_2$.
\end{itemize}
In particular, if $G$ satisfies the odd cycle condition,
then $G$ satisfies 
{\rm (v)}.
\end{Theorem}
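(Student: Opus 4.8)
The plan is to prove the cycle (i) $\Rightarrow$ (ii) $\Rightarrow$ (iii) $\Rightarrow$ (iv) $\Rightarrow$ (v) $\Rightarrow$ (i) together with the final assertion. The first step is to identify $C_{A_G}\setminus\squarefreeG$: by Proposition \ref{circuitwalk} it is exactly the set of binomials $f_\Gamma$ attached to type-(iii) walks $\Gamma=(C_1,e_1,\dots,e_r,C_2,e_r,\dots,e_1)$ with $r\ge 2$, and a parity count along $\Gamma$ shows that then one monomial of $f_\Gamma$ is divisible by $x_{e_1}^2$ and the other by $x_{e_2}^2$ (for $r=1$ one monomial stays squarefree, which is why $r=1$ lands in $\squarefreeG$). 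With this, (ii) $\Rightarrow$ (iii) $\Rightarrow$ (iv) follow at once from the stated chain ``fundamental $\Rightarrow$ indispensable $\Rightarrow$ not redundant'', read contrapositively.

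I would prove (iv) $\Rightarrow$ (v) and (i) $\Rightarrow$ (v) simultaneously by contraposition. Suppose (v) fails, fix an induced subgraph $H=C_1\cup P\cup C_2$ as in (v) with $|E(P)|=r\ge 2$, and let $\Gamma$ be the corresponding type-(iii) walk, so that $f_\Gamma\in C_{A_G}\setminus\squarefreeG$. Every vertex of $H$ is incident with both an odd-position and an even-position edge of $\Gamma$ and no vertex outside $H$ occurs, so $T_{f_\Gamma}=V(H)$, and since $H$ is induced this forces $(A_G)_{f_\Gamma}=A_H$. Because $H$ is connected, non-bipartite, and satisfies $|E(H)|=|V(H)|+1$, we get ${\rm rank}\,A_H=|V(H)|$ and ${\rm Ker}_\ZZ(A_H)$ of rank $1$; hence $I_{A_H}$ is the principal ideal generated by the binomial of a primitive generator of ${\rm Ker}_\ZZ(A_H)$, which must be $f_\Gamma$. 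So $f_\Gamma$ is fundamental, whence (iv) fails; and being fundamental it is indispensable, so it or $-f_\Gamma$ lies in every binomial generating set of $I_{A_G}$, which is impossible under (i) since $\squarefreeG$ is then such a set while $f_\Gamma\notin\squarefreeG$.

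The substance is (v) $\Rightarrow$ (i), which I would prove by induction on the $A_G$-degree $\mathbf b$, ordered by $\sum_i b_i$; the same induction gives (i) $\Rightarrow$ (ii). Since $(I_{A_G})_{\mathbf b}$ is spanned by binomials $m-m'$ with $\pi(m)=\pi(m')$, and a reducible one factors as $\gcd(m,m')\cdot(\text{binomial of smaller degree})$ while an irreducible non-primitive one splits off a primitive binomial of strictly smaller degree, the inductive hypothesis reduces everything to showing each \emph{primitive} binomial of degree $\mathbf b$ lies in $\langle\squarefreeG\rangle$. Those of types (i), (ii) of Proposition \ref{primitive}, and of type (iii) with a length-$1$ bridge, already lie in $\squarefreeG$. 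For a primitive $f_\Gamma$ of type (iii) whose bridge is a path $\Gamma_1$ of length $\ge 2$ traversed both ways — precisely the circuits in $C_{A_G}\setminus\squarefreeG$ — condition (v) forbids $C_1\cup\Gamma_1\cup C_2$ from being an induced subgraph, so it carries a chord $e^{*}$; splitting $\Gamma$ at $e^{*}$ rewrites $f_\Gamma=\sum_i m_i g_i$ with $g_i\in I_{A_G}$, $m_i\ne 1$, and $\deg g_i$ of strictly smaller coordinate-sum than $\mathbf b$ — just as in the model case of two triangles joined by a path carrying a bridging chord, where $f_\Gamma$ becomes a monomial times a type-(ii) circuit plus a monomial times another type-(ii) circuit. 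By the inductive hypothesis $g_i\in\langle\squarefreeG\rangle$, so $f_\Gamma\in\langle\squarefreeG\rangle$; moreover $f_\Gamma\in\mathfrak m I_{A_G}$ with $\mathfrak m=(x_1,\dots,x_n)$, so if $f_\Gamma\in C_{A_G}\setminus\squarefreeG$ it maps to $0$ in $I_{A_G}/\mathfrak m I_{A_G}$ and lies in no minimal binomial generating set, i.e.\ it is redundant. This proves (i); the same argument under hypothesis (i) — which forces (v) — gives (ii). (The remaining primitive type-(iii) binomials, whose two connecting walks differ or are not paths, are handled in the same spirit, exploiting repeated structure in $\Gamma$ together with (v).)

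For the ``in particular'' clause: if $G$ satisfies the odd cycle condition (Proposition \ref{oddcyclecondition}) and $C_1,C_2$ are vertex-disjoint odd cycles joined by a path $P$ with $|E(P)|\ge 2$, the odd cycle condition yields an edge of $G$ between a vertex of $C_1$ and a vertex of $C_2$; since $|E(P)|\ge 2$ that edge lies outside $C_1\cup P\cup C_2$, so the subgraph is not induced and (v) holds. I expect the main obstacle to be the splitting step in (v) $\Rightarrow$ (i): one must run the case analysis on the position of the chord $e^{*}$ (inside a cycle, inside the bridge, between the bridge and a cycle, or between the two cycles), check in each case that splitting $\Gamma$ there produces honest binomials of strictly smaller degree in $I_{A_G}$, and also settle the primitive type-(iii) walks whose connecting walks are unequal or non-path.
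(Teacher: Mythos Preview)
Your logical skeleton is fine and your contrapositive argument for (iv)$\Rightarrow$(v) (computing $\operatorname{rank} A_H$ and identifying $I_{A_H}$ as principal) is correct and in fact more explicit than what the paper writes. The gap is in the heart of (v)$\Rightarrow$(i), precisely at the step you already flag as the obstacle.

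Your plan is: condition (v) forces $C_1\cup\Gamma_1\cup C_2$ to be non-induced, so there is an extra edge $e^{*}$, and ``splitting $\Gamma$ at $e^{*}$'' expresses $f_\Gamma$ through binomials of strictly smaller degree. This fails when $e^{*}$ is a chord \emph{internal} to one of the odd cycles and the smaller odd cycle $C_{e^{*}}$ it creates does not pass through the attaching vertex $v_1$. Take $C_1$ a $5$-cycle on $v_1,w_1,w_2,w_3,w_4$ with chord $e^{*}=\{w_1,w_3\}$, so $C_{e^{*}}$ is the triangle on $w_1,w_2,w_3$ and the even $4$-cycle $E$ uses $v_1$. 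In $f_\Gamma$ the edges of $C_1$ alternate between the two monomials, so the two edges of $E\cap C_1$ lying in one monomial of $f_E$ sit in \emph{different} monomials of $f_\Gamma$; neither term of $f_E$ divides a term of $f_\Gamma$, and the replacement walk $\Gamma'=(C_{e^{*}},\{w_1,v_1\},\Gamma_1,C_2,\Gamma_1^{-1},\{v_1,w_1\})$ has $\deg f_{\Gamma'}=\deg f_\Gamma$. So the degree induction stalls.

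What rescues the argument is that (v) also forbids $C_{e^{*}}\cup(\{w_1,v_1\},\Gamma_1)\cup C_2$ from being induced, so there is a \emph{further} extra edge, and one must iterate. The paper packages this iteration as a separate Lemma~\ref{keylemma}, proved by induction on $|C_1|+|C_2|$: under (v), for \emph{any} pair of vertex-disjoint odd cycles joined by a path one can always find an edge of one of three specific shapes (between the two cycles, or between a cycle and a non-endpoint vertex of the path), and it is \emph{those} edges that yield the strict degree drop used in the proof of (v)$\Rightarrow$(i). Your single induction on degree cannot close without this auxiliary induction on cycle lengths; once you add it, your reduction and the paper's coincide. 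The same lemma is what makes the ``remaining primitive type-(iii) binomials'' (two different connecting walks) go through, so that sketch also depends on it.
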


In order to prove Theorem \ref{graphcircuit},
we need the following lemma:

\begin{Lemma}
\label{keylemma}
Let $G$ be a finite connected graph which satisfies
the condition {\rm (v)} in Theorem \ref{graphcircuit}.
Let $C$ and $C'$ be odd cycles of $G$
having no common vertex
and let $\Gamma$ a path of $G$
which combines a vertex $v$ of $C$
and a vertex $v'$ of $C'$.
Then, at least one of the following holds:
\begin{itemize}
\item[(a)]
There exists an edge of $G$ joining a vertex of $C$ with a vertex of $C'$.
\item[(b)]
There exists an edge of $G$ joining a vertex $p$ of $C$ with a vertex $q$ of $\Gamma$
where $q \neq v$ and $\{p,q\} \notin \Gamma$.
\item[(c)]
There exists an edge of $G$ joining a vertex $p$ of $C'$ with a vertex $q$ of $\Gamma$
where $q \neq v'$ and $\{p,q\} \notin \Gamma$.
\end{itemize}
\end{Lemma}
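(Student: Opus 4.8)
The plan is to argue by contradiction: assuming $G$ satisfies (v) and that none of (a), (b), (c) holds, I will exhibit an induced subgraph of $G$ of the shape forbidden by (v) — two vertex–disjoint odd cycles joined by a path of length $\ge 2$ and carrying no further edges. First note that $\Gamma$ must have length $\ge 2$, for otherwise its unique edge joins $v\in V(C)$ to $v'\in V(C')$ and (a) holds.

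The conceptual core is the ``clean case,'' in which $C$ and $C'$ are chordless cycles, $\Gamma$ is a chordless path, and $\Gamma$ is internally disjoint from $C\cup C'$, so that $V(\Gamma)\cap V(C)=\{v\}$, $V(\Gamma)\cap V(C')=\{v'\}$ and $V(C)\cap V(C')=\emptyset$. In this case I claim the induced subgraph $H:=G[V(C)\cup V(C')\cup V(\Gamma)]$ has no edge other than those of $C$, $C'$ and $\Gamma$. Indeed: an edge of $H$ with both endpoints inside $V(C)$, inside $V(C')$, or inside $V(\Gamma)$ would be a chord of $C$, $C'$, or $\Gamma$; an edge between $V(C)$ and $V(C')$ is precisely what the negation of (a) forbids; an edge between $V(C)$ and $V(\Gamma)\setminus\{v\}$ that does not lie on $\Gamma$ realizes the configuration in (b) (here the $\Gamma$–endpoint involved is $\ne v$), while the only edge of $\Gamma$ meeting both $V(C)$ and $V(\Gamma)\setminus\{v\}$ is its first edge; and the $C'$–side is symmetric. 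Hence $H=C\cup C'\cup\Gamma$, a forbidden induced subgraph, contradicting (v). (The same observation shows at once that the odd cycle condition implies (v), since by Proposition \ref{oddcyclecondition} it makes (a) automatic — this yields the last assertion of Theorem \ref{graphcircuit}.)

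To reduce to the clean case I will run a minimal–counterexample argument. Call a triple $(D,D',\Pi)$ — with $D,D'$ odd cycles having no common vertex and $\Pi$ a path of $G$ joining a vertex of $D$ to a vertex of $D'$ — \emph{bad} if none of the analogues of (a), (b), (c) holds for it (with $C$, $C'$, $\Gamma$, $v$, $v'$ replaced by $D$, $D'$, $\Pi$ and the endpoints of $\Pi$). By assumption $(C,C',\Gamma)$ is bad, so I may choose a bad triple $(D,D',\Pi)$ for which $|V(D)|+|V(D')|+\mathrm{length}(\Pi)$ is minimum, and it suffices to prove this minimal triple is clean. First, $\Pi$ is internally disjoint from $D$ and $D'$: if a vertex of $D$ occurs in the interior of $\Pi$, replace $\Pi$ by its subpath from the last such occurrence to the $D'$–endpoint; the result is still bad (any edge witnessing (a)/(b)/(c) for it witnesses it for $(D,D',\Pi)$, since the vertex and edge sets only shrink and the new $D$–endpoint still lies in $V(D)$) and has strictly smaller measure — a contradiction; likewise on the $D'$–side. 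Next, $D$ (and symmetrically $D'$) is chordless: a chord $e=\{s,t\}$ decomposes $D$ into two cycles $Q_1\cup e$ and $Q_2\cup e$ with lengths summing to $|D|+2$, exactly one of which, say $D_1=Q_1\cup e$, is odd; if the $D$–endpoint of $\Pi$ lies in $V(D_1)$ then $(D_1,D',\Pi)$ is bad and smaller, and otherwise it lies in the interior of $Q_2$, and one reroutes $\Pi$ so that it first runs inside $D$ from $D_1$ to that endpoint (along $Q_2$, or along another chord of $D$ when one is available) and then follows $\Pi$, again obtaining a bad triple of strictly smaller measure — here one uses that $\Pi$ is internally disjoint from $D$ to see the rerouted walk is a path, and that $Q_2\cup e$ is an even cycle of length $\ge 4$ to bound the detour length by $|V(D)|-|V(D_1)|-1$. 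Finally $\Pi$ is chordless, by the shortcut argument already used for cycles. This forces the minimal bad triple to be clean, and the proof is complete.

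The step I expect to be the main obstacle is the chord–elimination for the cycles: after rerouting $\Pi$ through $D_1$ one must check that no edge of $G$ inadvertently realizes (b) for the new triple, which reduces to classifying the edges of $G$ with both endpoints in $V(D)$ — each such edge is either an edge of $D$, whose incidences are under control, or a further chord of $D$, which can itself be used to shorten the detour, minimality of the measure preventing an infinite descent. The shortcut arguments and the clean–case analysis are otherwise routine bookkeeping.
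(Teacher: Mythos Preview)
Your proof is correct and follows essentially the same strategy as the paper's: the paper inducts on $|C|+|C'|$ (with the path--shortening handled by a separate ``repeat until induced'' argument in the base case), and your minimal--counterexample formulation with measure $|V(D)|+|V(D')|+\ell(\Pi)$ is an equivalent repackaging of the same reduction to the clean case. For the chord--elimination step you flag as the obstacle, the paper first disposes of \emph{every} chord $e$ whose odd subcycle $C_e$ contains $v$ (your Case~A applied over all chords), and only afterwards reroutes through some $C_e$ with the detour length $s$ chosen minimal; this is precisely the ``further chord'' descent you sketch in your last paragraph, and once one observes that no remaining chord of $D$ is incident to $v$ it cleanly rules out the problematic edges inside $V(D)$.
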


\begin{proof}
The proof is by induction on the sum of the length of $C$ and $C'$.

(Step 1.)
Suppose that $C$ and $C'$ are cycle of length $3$.
Then, $C$ and $C'$ are minimal.
If $C$, $C'$ and $\Gamma$ satisfy none of (a), (b) and (c),
then, by the condition (v),
it follows that $\Gamma$ is not an induced subgraph of $G$.
Then there exists a path $\Gamma'$ which combines $v$ and $v'$
whose vertex set is a proper subset of the vertex set of $\Gamma$.
By repeating the same argument, we may assume that 
the path $\Gamma'$ is an induced subgraph of $G$.
This contradicts to the condition (v).

(Step 2.)
Let $C$ and $C'$ be odd cycles of $G$
having no common vertex
and let $\Gamma$ a path of $G$
which combines a vertex $v$ of $C$
and a vertex $v'$ of $C'$.
If both $C$ and $C'$ are minimal,
then one of (a), (b) and (c) follows from the same argument
in Step 1.
Suppose that $C$ is not minimal, i.e.,
there exists
 a chord $e$ of $C$.
It is easy to see that there exists a unique odd cycle $C_e$ such that
$e \in E(C_e) \subset E(C) \cup \{ e\}$.
Note that the length of $C_e$ is less than that of $C$.

If $v$ is a vertex of $C_e$, then 
$C_e$, $C'$ and $\Gamma$ satisfy 
one of (a), (b) and (c) by the hypothesis of induction.
Thus $C$, $C'$ and $\Gamma$ satisfy the same condition.

Suppose that $v$ is not a vertex of $C_e$
for any chord $e$ of $C$.
Then $C_e$, $C'$ and
a path
$\Gamma'=(e_{i_1},\ldots,e_{i_s},\Gamma)$
where $(e_{i_1},\ldots,e_{i_s})$ is a part of $C$ satisfy 
one of (a), (b) and (c) by the hypothesis of induction.
We may assume that $s$ $(\geq 1)$ is minimal.
If $C$, $C'$ and $\Gamma$ satisfy none of (a), (b) and (c),
then $C_e$, $C$ and $\Gamma'$ satisfy the condition (b)
where $q$ is not a vertex of $\Gamma$.
This contradicts the minimality of $s$.
\end{proof}

\begin{proof}[Proof of Theorem \ref{graphcircuit}]
In general,
(i) $\Longrightarrow $
(iii)
and
(ii) $\Longrightarrow $
(iii) $\Longrightarrow $
(iv)
hold.
Moreover, by Proposition \ref{circuitwalk},
(iv) $\Longrightarrow $ (v) holds.

\smallskip

\noindent
\underline{\bf (v) $\Longrightarrow $ (i)}
Suppose that $G$ satisfies the condition (v).
Let $f=f_\Gamma \notin \squarefreeG$ where $\Gamma$ 
is an even closed walk satisfying the condition (iii) in
 Proposition \ref{primitive},
i.e.,
$\Gamma = (C_1,\Gamma_1,C_2,\Gamma_2)$,
where $C_1$ and $C_2$ are odd cycles of $G$
having no common vertex,
 and $\Gamma_1$ and $\Gamma_2$
are walks of $G$ both of which combine a vertex $v_1$ of $C_1$
and a vertex $v_2$ of $C_2$.
By Propositions \ref{circuitwalk} and \ref{primitive},
it is sufficient to show that $f$ is redundant.
Since $f$ does not belong to $\squarefreeG$,
at least one of $\Gamma_i$ is of length $>1$.
We may assume that, except for starting and ending vertices,
each $\Gamma_i$ does not contain the vertices of two odd cycles.
(Otherwise, $\Gamma$ separates into two even closed walk
and hence $f$ is redundant.)

If there exists an edge of $G$ joining a vertex of $C_1$ with a vertex of $C_2$,
then $f$ is redundant by \cite[Proof of Lemma 3.2]{indispensable}.
Suppose that there exists no such an edge.
(Then, in particular, the length of $\Gamma_i$ is greater than 1 for $i=1,2$.)
By Lemma \ref{keylemma},
 there exists an edge of $G$ joining a vertex $p$ of $C_1$
with a vertex $q \ (\neq v_1)$ of $\Gamma_1$
and $\{p,q\}$ does not belong to $\Gamma$.
Let $C_1= ( V_1 ,  V_2 )$ and $\Gamma_1 = (W_1, W_2)$
where
\begin{itemize}
\item
$V_1$ and $V_2$ are paths joining $v_1$ and $p$;
\item
$W_i$ is a walk joining $v_i$ and $q$ for $i = 1,2$.
\end{itemize}
Since the length of $C_1$ is odd,
we may assume that the length of the walk $(V_1,W_1)$ is odd.
Note that both
$\Gamma_3 = (V_1,W_1,\{ q, p \})$
and
$\Gamma_4 = (V_2,\Gamma_2,C_2, W_2,\{ q, p \})$
are even closed walks.
It then follows that $f \in \langle f_{\Gamma_3}, f_{\Gamma_4}  \rangle$
and $\deg (f_{\Gamma_3} ) ,\deg (f_{\Gamma_4} ) < \deg (f) $.
Hence $f$ is redundant.

Thus, $f$ is redundant and hence $G$ satisfies the condition (i).

\smallskip

\noindent
\underline{\bf (v) $\Longrightarrow $ (ii)}
Suppose that $G$ satisfies the condition (v).
Let $f=f_\Gamma \in C_{A_G} \setminus 
\squarefreeG$ where $\Gamma = (C_1,e_1,\ldots,e_r,C_2,e_r,\ldots,e_1)$
$(r>1) $
is an even closed walk satisfying the condition (iii) in Proposition \ref{circuitwalk}.
Then $f$ is redundant by the same argument above.
Thus $G$ satisfies the condition (ii).
\end{proof}

A similar theorem holds for $\squarefreesquarefreeG$.

\begin{Theorem}
\label{graphcircuit2}
Let $G$ be a finite connected graph.
Then the following conditions are equivalent:
\begin{itemize}
\item[(i)]
$I_{A_G} = \langle \squarefreesquarefreeG \rangle$;
\item[(ii)]
Any circuit in $C_{A_G} \setminus \squarefreesquarefreeG$ is
redundant;
\smallskip
\item[(iii)]
Any circuit in $C_{A_G} \setminus \squarefreesquarefreeG$ is
not indispensable;
\smallskip
\item[(iv)]
Any circuit in $C_{A_G} \setminus \squarefreesquarefreeG$ is
not fundamental;
\item[(v)]
There exists no induced subgraph of $G$
consisting of two odd cycles $C_1$, $C_2$ 
having no common vertex and 
a path of length $\geq 1$
which connects a vertex of $C_1$ and 
a vertex of $C_2$.
\end{itemize}
\end{Theorem}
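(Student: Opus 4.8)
The plan is to mimic the proof of Theorem \ref{graphcircuit} almost verbatim, since the only structural difference between the two theorems is that squarefree-squarefree circuits $f_\Gamma$ are exactly those coming from type (i) and type (ii) even closed walks in Proposition \ref{circuitwalk}, whereas $C_{A_G}\setminus\squarefreesquarefreeG$ consists of all type (iii) circuits \emph{including the case $r=1$}. First I would record the ``trivial'' implications: in general (i) $\Longrightarrow$ (iii), (ii) $\Longrightarrow$ (iii) $\Longrightarrow$ (iv) hold by the general facts recalled before Theorem \ref{graphcircuit} (fundamental $\Rightarrow$ indispensable $\Rightarrow$ not redundant). For (iv) $\Longrightarrow$ (v): a circuit $f_\Gamma$ with $\Gamma$ of type (iii) in Proposition \ref{circuitwalk} and \emph{any} $r\geq 1$ lies in $C_{A_G}\setminus\squarefreesquarefreeG$, because a single edge $e_1$ traversed twice contributes $x_{e_1}^2$ to both monomials of $f_\Gamma$; so if $G$ contained an induced subgraph as in (v) with a path of length $\geq 1$ between the two odd cycles, that would produce a fundamental circuit (it is a circuit by Proposition \ref{circuitwalk}, and fundamentality follows exactly as one shows type (iii) circuits are fundamental) outside $\squarefreesquarefreeG$, contradicting (iv). I should double-check that a type (iii) circuit is indeed always fundamental, not merely a circuit — this is the one place where I need to be a little careful, and it is presumably standard (a circuit $f$ of $I_{A_G}$ with $\supp(f)$ minimal is fundamental precisely when $I_{A_f}=\langle f\rangle$, and for these walk-circuits $A_f$ is supported on the edges of the subgraph $C_1\cup\Gamma\cup C_2$, whose toric ideal is generated by $f$).

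For the substantive direction (v) $\Longrightarrow$ (i) and (v) $\Longrightarrow$ (ii), I would run the same argument as in the proof of Theorem \ref{graphcircuit}, now invoking the version of Lemma \ref{keylemma} suited to paths of length $\geq 1$. The key point is that condition (v) of Theorem \ref{graphcircuit2} is strictly stronger than condition (v) of Theorem \ref{graphcircuit} (it forbids even a length-$1$ path, i.e.\ an induced edge, between the two odd cycles), so the hypotheses of Lemma \ref{keylemma} are still satisfied, and in fact we get more: under the new (v), alternative (a) of Lemma \ref{keylemma} cannot occur as an \emph{induced} edge. Thus given $f = f_\Gamma\notin\squarefreesquarefreeG$ with $\Gamma=(C_1,\Gamma_1,C_2,\Gamma_2)$ of type (iii) in Proposition \ref{primitive}, I first reduce (as in the earlier proof) to the case where, apart from endpoints, neither $\Gamma_i$ meets two odd cycles, so that $f$ is not already obviously redundant. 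If some $\Gamma_i$ has length $0$, then $v_1=v_2$ is a common vertex of $C_1$ and $C_2$ through which we may split, or else $\Gamma$ degenerates; the genuinely new case over Theorem \ref{graphcircuit} is a $\Gamma_i$ of length exactly $1$, i.e.\ $v_1v_2\in E(G)$.

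In that new case I would argue: if $v_1 v_2$ is an edge of $G$, consider the induced subgraph on $V(C_1)\cup V(C_2)\cup\{v_1,v_2\}$. By condition (v) applied to $C_1$, $C_2$ and the path of length $1$ given by the edge $v_1v_2$, this cannot be induced as described, so there must be some additional chord or connecting edge; running the same analysis as in Lemma \ref{keylemma}'s proof (induction on the total length of $C_1$ and $C_2$, replacing odd cycles by shorter odd cycles through chords), one obtains an edge $\{p,q\}\notin\Gamma$ joining a vertex $p$ of $C_1$ to a vertex $q\neq v_1$ of $\Gamma_1$ (or the symmetric statement for $C_2,\Gamma_2$). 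Splitting $C_1=(V_1,V_2)$ and $\Gamma_1=(W_1,W_2)$ at $p$ and $q$ exactly as in the proof of Theorem \ref{graphcircuit}, and choosing parities so that $(V_1,W_1)$ has odd length, the walks $\Gamma_3=(V_1,W_1,\{q,p\})$ and $\Gamma_4=(V_2,\Gamma_2,C_2,W_2,\{q,p\})$ are even closed walks with $\deg f_{\Gamma_3},\deg f_{\Gamma_4}<\deg f$ and $f\in\langle f_{\Gamma_3},f_{\Gamma_4}\rangle$, so $f$ is redundant. This proves (v) $\Longrightarrow$ (i); the same computation with $\Gamma$ taken of the specific palindromic form $(C_1,e_1,\ldots,e_r,C_2,e_r,\ldots,e_1)$ with $r\geq 1$ from Proposition \ref{circuitwalk} gives (v) $\Longrightarrow$ (ii), closing the cycle of implications.

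The main obstacle I anticipate is bookkeeping for the length-$1$ and length-$0$ boundary cases: I need to make sure that when $\Gamma_i$ is a single edge $v_1v_2$ the edge $\{p,q\}$ produced by the Lemma-style argument really does satisfy $q\neq v_1$ (so that the split $\Gamma_1=(W_1,W_2)$ is non-degenerate) and $\{p,q\}\notin\Gamma$, and that the reduction ``each $\Gamma_i$ avoids a second odd cycle except at endpoints'' can be arranged without destroying the hypothesis that $f\notin\squarefreesquarefreeG$. Everything else is a transcription of the argument already given for Theorem \ref{graphcircuit}, with (v) tightened by one unit of path length.
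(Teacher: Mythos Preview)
Your overall strategy is sound, and the easy implications are fine (with one slip: for a type (iii) circuit with $r=1$, the square $x_{e_1}^2$ appears in \emph{one} monomial of $f_\Gamma$, not both; this does not affect the conclusion that $f_\Gamma\notin\squarefreesquarefreeG$).

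However, the paper's proof is organized quite differently, and your treatment of the key new case has a genuine gap. The paper does \emph{not} rerun the whole argument of Theorem~\ref{graphcircuit}. Instead it observes that condition (v) here implies condition (v) of Theorem~\ref{graphcircuit}, so by that theorem $I_{A_G}=\langle\squarefreeG\rangle$ and every circuit outside $\squarefreeG$ is already redundant. Thus the only thing left is to show that each circuit in $\squarefreeG\setminus\squarefreesquarefreeG$ is redundant; by Proposition~\ref{circuitwalk} these are exactly the $r=1$ circuits $f_\Gamma$ with $\Gamma=(C_1,e_0,C_2,e_0)$.

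For that $r=1$ case your argument does not go through as written. You assert that ``running the same analysis as in Lemma~\ref{keylemma}'s proof'' yields an edge $\{p,q\}\notin\Gamma$ with $p\in C_1$ and $q\in\Gamma_1\setminus\{v_1\}$. But when $\Gamma_1=(e_0)$, the conclusions (a)--(c) of Lemma~\ref{keylemma} are \emph{trivially} satisfied by $e_0$ itself, so the lemma yields nothing new; and the ``extra edge'' guaranteed by (v) may well be a chord of $C_1$ or an edge between $C_1\setminus\{v_1\}$ and $C_2\setminus\{v_2\}$, neither of which fits your description or your proposed splitting into $\Gamma_3,\Gamma_4$. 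The paper instead argues by cases: if there is an edge $e'\neq e_0$ joining $C_1$ and $C_2$, redundancy follows from \cite[Proof of Lemma 3.2]{indispensable}; otherwise some $C_i$ has a chord $e$, and either $v_i$ lies on the odd subcycle $C_e$ (again redundant by \cite{indispensable}), or one passes to $C_e$, a now longer path to $C_2$, and derives a contradiction from Lemma~\ref{keylemma} and minimality of that path. Your sketch would need this case analysis (or a strengthened version of Lemma~\ref{keylemma} excluding the trivial witness $e_0$) to close the gap.
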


\begin{proof}
As stated in Proof of Theorem \ref{graphcircuit},
it is sufficient to show
``(v) $\Longrightarrow $ (i)"
and
``(v) $\Longrightarrow $ (ii)."
Suppose that $G$ satisfies (v).
By Theorem \ref{graphcircuit}, 
$I_{A_G} = \langle \squarefreeG \rangle$ and 
any circuit in $C_{A_G} \setminus \squarefreeG$ is
redundant.
Thus, in order to prove (i) and (ii), 
it is sufficient to show that 
any circuit in $ \squarefreeG  \setminus \squarefreesquarefreeG$ is
redundant.
Let $f$ be a binomial in 
$ \squarefreeG  \setminus \squarefreesquarefreeG$.
By Proposition \ref{circuitwalk},
$f = f_\Gamma$ where $\Gamma$ is an even closed walk
which consists of two odd cycles $C_1$ and $C_2$
having no common vertex and an edge $e_0$ of $G$
which combines a vertex of $C_1$ and a vertex of $C_2$.
Since $G$ satisfies the condition (v),
$\Gamma$ is not an induced subgraph of $G$.
If there exists an edge $e' (\neq e_0)$ of $G$
joining a vertex of $C_1$ with a vertex of $C_2$,
then $f$ is redundant by \cite[Proof of Lemma 3.2]{indispensable}.
Suppose that there exists no such an edge.
Since $\Gamma$ is not an induced subgraph of $G$,
we may assume that $C_1$ is not minimal.
Then there exists a chord $e$ of $C_1$
and
an odd cycle $C_e$ such that
$e \in E(C_e) \subset E(C) \cup \{ e\}$.
If $v$ is a vertex of $C_e$, then $f$ is redundant
by \cite[Proof of Lemma 3.2]{indispensable}.
Suppose that $v$ is not a vertex of $C_e$
for any chord $e$ of $C$.
Note that $C_e$, $C_2$ and $\Gamma =(e_{i_1},\ldots,e_{i_s},e_0)$
where $(e_{i_1},\ldots,e_{i_s})$ is a part of $C$ satisfy 
one of (a), (b) and (c) in Lemma \ref{keylemma}.
Suppose that $s$ is minimal.
Since 
there exists no edge $e' (\neq e_0)$ of $G$
joining a vertex of $C_1$ with a vertex of $C_2$,
 $C_e$, $C_2$ and $\Gamma $
 satisfy the condition (b).
This contradicts the minimality of $s$.
\end{proof}

Using Theorems \ref{graphcircuit} and \ref{graphcircuit2},
we give several classes of graphs $G$ such that
$I_{A_G} = \langle \squarefreeG \rangle$
and $K[A_G]$ is nonnormal.

\begin{Example}
{\em
Let $G$ be the graph whose vertex-edge incidence matrix is
$$
A_G=
\begin{pmatrix}
1 & 0 & 0 & 1 & 0 & 0 & 0  & 1 & 0\\
1 & 1 & 0 & 0 & 0 & 0 & 0  & 0 & 0\\
0 & 1 & 1 & 0 & 0 & 0 & 0  & 1 & 0\\
0 & 0 & 1 & 1 & 1 & 0 & 0  & 0 & 0\\
0 & 0 & 0 & 0 & 1 & 1 & 0  & 0 & 1\\
0 & 0 & 0 & 0 & 0 & 1 & 1  & 0 & 0\\
0 & 0 & 0 & 0 & 0 & 0 & 1  & 0 & 1
\end{pmatrix}
$$
Then, 
$I_{A_G}$ is generated by the circuits
$x_1 x_3 - x_2 x_4, \ \  x_3 x_4 x_6 x_9  -x_5^2 x_7 x_8$
(\cite[Example 3.5]{indispensable}).
Since $G$ does not satisfy the odd cycle condition,
$K[A_G]$ is not normal.
}
\end{Example}

\begin{Example}
\label{sqfexample}
{\em
Let $G$ be the graph whose vertex-edge incidence matrix is
$$
A_G=
\begin{pmatrix}
1 & 0 & 0 & 1 & 0 & 0 & 0 & 0 & 1 & 0\\
1 & 1 & 0 & 0 & 0 & 0 & 0 & 0 & 0 & 0\\
0 & 1 & 1 & 0 & 0 & 0 & 0 & 0 & 1 & 0\\
0 & 0 & 1 & 1 & 1 & 0 & 0 & 1 & 0 & 0\\
0 & 0 & 0 & 0 & 1 & 1 & 0 & 0 & 0 & 1\\
0 & 0 & 0 & 0 & 0 & 1 & 1 & 0 & 0 & 0\\
0 & 0 & 0 & 0 & 0 & 0 & 1 & 1 & 0 & 1
\end{pmatrix}.
$$
Then,
$I_{A_G}$ is generated by the circuits
$x_5 x_7 - x_6 x_8 , \ \  x_1 x_3 - x_2 x_4, \ \  x_3 x_4 x_{10} -x_5 x_8 x_9.$
Since $G$ does not satisfy the odd cycle condition,
$K[A_G]$ is not normal.
}
\end{Example}

Example \ref{sqfexample} is 
the most simple nonnormal example
whose toric ideal is generated by
circuits $u - v$ such that
the two monomials $u$ and $v$ are squarefree.
In fact,

\begin{Proposition}
If $I_A$ is generated by binomials
$f_1= X^{{\bf u}^+} - X^{{\bf u}^-}$,
$f_2=X^{{\bf v}^+} - X^{{\bf v}^-}$
such that 
$X^{{\bf u}^+}$, $X^{{\bf u}^-}$,
$X^{{\bf v}^+}$ and $X^{{\bf v}^-}$ are squarefree,
then there exists a monomial order such that 
$\{f_1,f_2\}$ is a Gr\"obner basis of $I_A$
and hence $K[A]$ is normal.
\end{Proposition}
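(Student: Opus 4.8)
The plan is to show that two binomials with all four monomials squarefree can be oriented so that their leading terms, together with an appropriate tie-breaking rule, give a Gröbner basis with squarefree initial ideal; normality then follows from \cite[Proposition 13.15]{Stu}. The key point is that for such a pair Buchberger's criterion is automatic once we choose the orientation correctly.

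First I would set $f_1 = X^{\Bu^+} - X^{\Bu^-}$, $f_2 = X^{\Bv^+} - X^{\Bv^-}$ with all of $X^{\Bu^+}, X^{\Bu^-}, X^{\Bv^+}, X^{\Bv^-}$ squarefree, and consider the $S$-polynomial. If $\gcd$ of the two chosen leading terms is $1$, then the $S$-pair reduces to zero for trivial reasons (Buchberger's first criterion), so we only need to worry about the case where the leading terms share a variable. Because every monomial in sight is squarefree, writing $\supp(X^{\Bu^+}) = P_1$, $\supp(X^{\Bu^-}) = N_1$, $\supp(X^{\Bv^+}) = P_2$, $\supp(X^{\Bv^-}) = N_2$, each monomial is just the product of the variables in the corresponding set, and an $S$-polynomial of two such binomials is again (up to cancellation) a difference of squarefree monomials times a common squarefree factor. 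The reduction of the $S$-polynomial by $\{f_1, f_2\}$ therefore stays inside the world of squarefree monomials, and I would track the supports to see that it terminates at $0$.

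The heart of the matter is choosing the monomial order. I would pick any variable order (say $x_1 > x_2 > \cdots > x_n$) and use a lexicographic (or degree-lex) order refined so that the leading term of $f_i$ is the one whose support is lexicographically larger; the claim to verify is that with this choice the $S$-polynomial of $f_1$ and $f_2$ has its leading term divisible by $X^{\Bu^+}$ or $X^{\Bv^+}$, so a single reduction step strictly decreases it, and by induction on the leading monomial the reduction reaches $0$. Concretely, if the two leading terms are $X^{\Bu^+}$ and $X^{\Bv^+}$ with $L = \mathrm{lcm}(X^{\Bu^+}, X^{\Bv^+})$, then $S(f_1,f_2) = \pm\bigl( (L/X^{\Bu^+}) X^{\Bu^-} - (L/X^{\Bv^+}) X^{\Bv^-}\bigr)$; since $L/X^{\Bu^+}$ is supported on $P_2\setminus P_1$ and $X^{\Bu^-}$ on $N_1$, and the order was chosen lexicographically with the smaller-support trailing terms, one checks that the larger of the two terms of $S(f_1,f_2)$ is divisible by one of $X^{\Bu^+}, X^{\Bv^+}$. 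The main obstacle I anticipate is handling the bookkeeping when $P_1 \cap N_2$ or $N_1 \cap P_2$ is nonempty — i.e., when a variable that is a leading variable of one binomial is a trailing variable of the other; there one has to be careful that the chosen order really does make the reduction monotone, and it may be cleanest to invoke irreducibility of $f_1, f_2$ (since $I_A$ is generated by them and they are binomials in a toric ideal, each is of the form $X^{\Bw^+}-X^{\Bw^-}$ with $\mathrm{supp}(X^{\Bw^+})\cap\mathrm{supp}(X^{\Bw^-})=\emptyset$) to rule out degenerate cancellations.

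Once Buchberger's criterion is verified, $\{f_1, f_2\}$ is a Gröbner basis with respect to the constructed order, its initial ideal is $\langle X^{\Bu^+}, X^{\Bv^+}\rangle$ (or the analogous pair of leading terms), which is squarefree, and $K[A]$ is normal by \cite[Proposition 13.15]{Stu}. I would also remark that this gives the converse flavor noted after Example~\ref{sqfexample}: normality is forced, so genuinely nonnormal examples require at least three such circuits, matching the count in that example.
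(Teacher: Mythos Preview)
Your proposal has a genuine gap at exactly the point you flag as ``the main obstacle.'' You correctly identify the dangerous case --- when $P_1\cap N_2\neq\emptyset$ and $N_1\cap P_2\neq\emptyset$ (a variable is leading in one binomial and trailing in the other, and vice versa) --- but your proposed fix, invoking irreducibility of $f_1,f_2$, does not touch it: irreducibility only gives $P_i\cap N_i=\emptyset$, which is already assumed. In that crossing situation your claim that ``the larger of the two terms of $S(f_1,f_2)$ is divisible by one of $X^{\Bu^+},X^{\Bv^+}$'' can fail; the $S$-polynomial may be supported entirely on $N_1\cup N_2$, and the reduction stalls. Choosing ``any'' lex order and hoping Buchberger goes through is not enough.

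The paper's proof resolves this by showing the obstacle \emph{cannot occur}. If $x_i\in P_1\cap N_2$ and $x_j\in N_1\cap P_2$, set ${\bf w}={\bf u}+{\bf v}$ and $g=X^{{\bf w}^+}-X^{{\bf w}^-}\in I_A$. The $i$th and $j$th coordinates cancel in ${\bf w}$, so $x_i,x_j\notin\supp(g)$; but every term of $f_1$ contains $x_i$ or $x_j$, and likewise for $f_2$, so neither monomial of $g$ is divisible by any monomial of $f_1$ or $f_2$, and $g$ cannot lie in $\langle f_1,f_2\rangle$ --- contradicting $I_A=\langle f_1,f_2\rangle$. This (together with the symmetric argument using ${\bf u}-{\bf v}$) lets one orient $f_2$ so that $P_1$ is disjoint from \emph{both} $P_2$ and $N_2$. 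Now take the lex order with the variables of $P_1$ largest, then those of $P_2$, then the rest: the initial terms are $X^{\Bu^+}$ and $X^{\Bv^+}$, they are coprime, and Buchberger's first criterion finishes immediately. The missing idea in your sketch is precisely this use of the hypothesis ``$I_A$ is generated by $f_1,f_2$'' to exclude the crossing configuration; without it the argument does not close.
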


\begin{proof}
Suppose that
$x_i \in {\rm supp} (X^{{\bf u}^+})  \cap {\rm supp} (X^{{\bf v}^-})$ and 
$x_j \in {\rm supp} (X^{{\bf u}^-})  \cap {\rm supp} (X^{{\bf v}^+})$.
Let ${\bf w} = {\bf u} + {\bf v} \in {\rm Ker}_\ZZ (A)$
and 
$g=X^{{\bf w}^+} - X^{{\bf w}^-}$.
Then $g$ belongs to $I_A$.
Since $x_i$ belongs to 
${\rm supp} (X^{{\bf u}^+})  \cap {\rm supp} (X^{{\bf v}^-})$,
${\rm supp} (g)$ does not contain $x_i$.
Similarly, since $x_j$ belongs to 
${\rm supp} (X^{{\bf u}^-})  \cap {\rm supp} (X^{{\bf v}^+})$,
${\rm supp} (g)$ does not contain $x_j$.
Hence $g$ is not generated by $f_1$ and $f_2$.
This contradicts that $g \in I_A$.
Thus, we may assume that 
${\rm supp} (X^{{\bf u}^+})  \cap {\rm supp} (X^{{\bf v}^-}) = \emptyset$
and
${\rm supp} (X^{{\bf u}^+})  \cap {\rm supp} (X^{{\bf v}^+}) = \emptyset
$.
Let $<$ be a lexicographic order induced by the ordering
$$
{\rm supp}(X^{{\bf u}^+}) > 
{\rm supp}(X^{{\bf v}^+} )> 
\mbox{ other variables} .$$
Then ${\rm in}_<(f_1) = X^{{\bf u}^+}$ and
${\rm in}_<(f_2) = X^{{\bf v}^+}$ are relatively prime.
Hence $\{f_1,f_2\}$ is a Gr\"obner basis of $I_A$.
Since both ${\rm in}_<(f_1)$ and
${\rm in}_<(f_2) $ are squarefree, $K[A]$ is normal.
\end{proof}

Let $G_1 = (V_1, E_1)$ and 
$G_2 = (V_2, E_2)$ be graphs such that $V_1 \cap V_2$ is a clique of both graphs.
The new graph $G = G_1 \sharp G_2$ with the vertex set $V = V_1 \cup V_2$
and edge set $E=E_1 \cup E_2$ is called the {\em clique sum} of $G_1$ and $G_2$
along $V_1 \cap V_2$.
If the cardinality of $V_1 \cap V_2$ is $k+1$, this operation is called a {\em $k$-sum} of the graphs.

\begin{Example}
{\em
Let $G$ be the 0-sum of two complete graphs having at least 4 vertices.
Then, $G$ satisfies the condition (v) in Theorem \ref{graphcircuit2}
and hence $I_{A_G}$ is
generated by $\squarefreesquarefreeG$.
Since $G$ does not satisfy the odd cycle condition,
$K[A_G]$ is not normal.
On the other hand, 
by the criterion \cite[Theorem 2.1]{R1},
it follows that $K[A_G]$ does not satisfy Serre's condition ($R_1$).
}
\end{Example}

\begin{Example}
{\em
Let $G$ be the 1-sum of two complete graphs having at least 5 vertices.
Then, $G$ satisfies the condition (v) in Theorem \ref{graphcircuit2}
and hence $I_{A_G}$ is
generated by $\squarefreesquarefreeG$.
Since $G$ does not satisfy the odd cycle condition,
$K[A_G]$ is not normal.
On the other hand, 
by the criterion \cite[Theorem 2.1]{R1},
it follows that $K[A_G]$ satisfies Serre's condition ($R_1$).
}
\end{Example}


\begin{thebibliography}{99}

\bibitem{Rekha}
T. Bogart, A.~N.~Jensen and R.~R.~Thomas,
 The circuit ideal of a vector configuration,
 {\it J. Algebra} {\bf 309} (2007), 518 -- 542.


\bibitem{GGP}
I. M. Gelfand, M. I. Graev and A. Postnikov,
Combinatorics of hypergeometric functions
associated with positive roots, {\it in}
 ``Arnold--Gelfand Mathematics Seminars,
Geometry and Singularity Theory"
(V. I. Arnold, I. M. Gelfand, M. Smirnov and V. S. Retakh, Eds.),
Birkh\"auser, Boston, 1997, pp. 205 -- 221.


\bibitem{R1}
T. Hibi and L. Katth\"an,
Edge rings satisfying Serre's condition ($R_1$), preprint, 2012.
{\tt arXiv:1202.4889v2 [math.CO]}

\bibitem{rootbook}
J. E. Humphreys, ``Introduction to Lie Algebras and Representation Theory," Second Printing,
Revised, Springer--Verlag, Berlin, Heidelberg, New York, 1972.
 
\bibitem{V}
J.~Martinez-Bernal and R.~H.~Villarreal,
Toric ideals generated by circuits,
{\it Algebra Colloq.} {\bf 19} (2012), 665 -- 672.


\bibitem{cpure}
H. Ohsugi, J. Herzog and T. Hibi,
Combinatorial pure subrings,
{\it Osaka J. Math.} {\bf 37} (2000), 745 -- 757. 

\bibitem{normalOH}
H. Ohsugi and T. Hibi,
Normal polytopes arising from finite graphs,
{\it J. Algebra} {\bf 207} (1998), 409 -- 426. 

\bibitem{quadgene}
H. Ohsugi and T. Hibi,
Toric ideals generated by quadratic binomials,
{\it J. Algebra} {\bf 218} (1999), 509 -- 527. 



\bibitem{quadroot}
H. Ohsugi and T. Hibi,
Quadratic initial ideals of root systems,
{\it Proceedings of the AMS}, {\bf 130} (2002), 1913 -- 1922. 

\bibitem{indispensable}
H. Ohsugi and T. Hibi,
Indispensable binomials of finite graphs,
{\it J. Algebra and Its Applications} {\bf 4} (2005),
421 -- 434.

\bibitem{OHtable}
H. Ohsugi and T. Hibi,
Toric ideals arising from contingency tables,
in ``Commutative Algebra and Combinatorics" (W. Bruns Ed.),
Ramanujan Mathematical Society Lecture Notes Series, Number 4, 
Ramanujan Math. Soc., Mysore, 2007, pp. 91 -- 115. 

\bibitem{RTT}
E. Reyes, C. Tatakis and A. Thoma,
Minimal generators of toric ideals of graphs,
{\it Advances in Applied Math.}
{\bf 48} (2012), 64 -- 78.



\bibitem{Stu}
B. Sturmfels, ``Gr\"obner bases and convex polytopes,"
Amer. Math. Soc., Providence, RI, 1996.

\end{thebibliography}
\end{document}